\newcommand {\Hom}{\operatorname {Hom}}
\newcommand {\im}{\operatorname {Im}}
\newcommand {\id}{\operatorname {id}}
\newcommand {\der}{\operatorname {Der}}
\newcommand {\sgn}{\operatorname {sgn}}
\newcommand {\PH}{\operatorname {PH}}
\newcommand {\de}{\operatorname {d}}
\begin{document}
\setlength{\baselineskip}{1.4em}
\numberwithin{equation}{section}
\newtheorem{thm}{Theorem}[section]
\newtheorem{prop}[thm]{Proposition}
\newtheorem{lem}[thm]{Lemma}
\newtheorem{cor}[thm]{Corollary}
\theoremstyle{definition}
\newtheorem{defn}[thm]{Definition}
\newtheorem{rmk}[thm]{Remark}
\newtheorem{exam}[thm]{Example}

\title[Twisted Poincar\'{e} duality between Poisson
homology and cohomology]{Twisted Poincar\'{e} duality
between Poisson homology and Poisson cohomology}
\author{J.~Luo}
\address{School of Mathematical Sciences, Fudan University, Shanghai 200433, China}
\email{13110180056@fudan.edu.cn}
\author{S.-Q.~Wang}
\address{Department of Mathematics, East China University of Science and Technology, Shanghai 200237, China}
\email{sqwang@ecust.edu.cn}
\author{Q.-S.~Wu}
\address{School of Mathematical Sciences, Fudan University, Shanghai 200433, China}
\email{qswu@fudan.edu.cn}

\begin{abstract}
A version of the twisted Poincar\'{e} duality is proved between the
Poisson homology and cohomology of a polynomial Poisson algebra with
values in an arbitrary Poisson module. The duality is achieved by
twisting the Poisson module structure in a canonical way, which is
constructed from the modular derivation. In the case that the Poisson
structure is unimodular, the twisted Poincar\'{e} duality reduces to
the Poincar\'{e} duality in the usual sense. The main result generalizes the
work of Launois-Richard \cite{LR} for the quadratic Poisson
structures and Zhu \cite{Zhu} for the linear Poisson structures.
\end{abstract}

\subjclass[2010]{Primary 17B63, 17B55, 13D04, 16E40}

\keywords{Poisson algebras, Poisson (co)homology, modular class, Poincar\'{e} duality}

\maketitle

\setcounter{section}{-1}
\section{Introduction}

Poisson structures naturally appear in classical/quantum mechanics,
in the deformation theory of commutative algebras and in
mathematical physics. They play an important role in Poisson
geometry, in algebraic geometry and non-commutative geometry.
Poisson cohomologies are important invariants of Poisson structures.
They are closely related to Lie algebra cohomology, but Poisson
cohomology is finer in general. The set of Casimir elements of the
Poisson structure is the $0$-th cohomology; Poisson derivations
modulo Hamiltonian derivations is the $1$-st cohomology. Poisson
cohomology appears as one considers deformations of Poisson
algebras. The basic concepts of Hamiltonian mechanics are
conveniently expressed in terms of Poisson cohomology.

The Poisson cohomology of a symplectic manifold is precisely its de
Rham cohomology. The Poisson cohomology of a linear Poisson
structure is also easy to understand. The Poisson cohomology for all
Poisson structures on affine plane which are homogeneous is computed
in \cite{RV} (see also \cite{Mon}). Pichereau \cite{Pic} succeeded
to compute the Poisson cohomology for affine Poisson 3-dimensional
spaces and Poisson surfaces defined by a weight homogeneous polynomial
with an isolated singularity. In general, computing the Poisson
cohomology of a given Poisson structure is very difficult.

Poisson homology has a close relation with the Hochschild homology
of its deformation algebra. In some cases, Poisson homology is
computable. Van den Bergh \cite{VdB}, Marconnet \cite{Mar},  Berger
and Pichereau \cite{BP} considered some 3-dimensional Calabi-Yau
algebras as deformations of polynomial Poisson algebras
respectively, by computing the corresponding Poisson homology, they
computed the Hochschild homology of the deformation algebras by
using the Brylinsky spectral sequence \cite{Bry,EG}. In a similar
way, Tagne Pelap computed the Hochschild homology of 4-dimensional
Sklyanin algebras \cite{TaP1,TaP2}. The Poisson structures
considered by them are all Jacobian Poisson structure (JPS, for
short) \cite{Prz}. JPS is a unimodular Poisson structure, and there
is a Poincar\'{e} duality between Poisson homology and cohomology
for unimodular Poisson algebras.  Xu \cite{Xu}
proved this for unimodular Poisson manifolds. So, by the duality,
the Poisson cohomology in these cases can also be obtained. Tagne Pelap \cite{TaP3}
also computed the Poisson (co)homology for some non-unimodular Poisson structures (GJPS) over the
polynomial algebras in $3$ variables.

On the other hand, one can compute the Poisson (co)homology via the
Hochschild (co)homology of the deformation algebra \cite{LR}. By
using the semiclassical limit of the dualizing bimodule between the
Hochschild homology and Hochschild cohomology of the corresponding
quantum affine space, Launois-Richard \cite{LR} obtained a new
Poisson module which provides a twisted Poincar\'{e} duality between
Poisson homology and cohomology  for the polynomial algebra with
quadratic Poisson structure. Following this idea, Zhu \cite{Zhu}
constructed a new Poisson module structure, which comes from the
dualizing bimodule resulting in the Poincar\'{e} duality between
Hochschild homology and cohomology over the universal enveloping
algebra \cite{Ye},  and obtained a twisted Poincar\'{e} duality for
linear Poisson algebras.

The major work in this paper is to prove a version of the twisted
Poincar\'{e} duality between the Poisson homology and cohomology for
any polynomial Poisson algebras with values in an arbitrary Poisson
module. The following is Theorem \ref{main-theorem}.

\noindent
{\bf Theorem.}  Let $R$ be a polynomial Poisson algebra in $n$ variables, $M$ be a right Poisson $R$-module. Then
for all $p \in \mathbb{N}$,  there is a functorial isomorphism
$$\PH_p(R, M_t)\cong \PH^{n-p}(R, M),$$ where
$\PH^{n-p}(R, M)$ is the $(n-p)$-th Poisson cohomology of $R$ with values in $M$,
and $\PH_p(R, M)$ is the $p$-th Poisson homology of $R$ with values in the twisted Poisson module $M_t$
twisted by the modular derivation of $R$ (see Proposition \ref{twist-poisson-module-structure}).

The isomorphism is obtained from an explicit isomorphism between the Poisson cochain complex of $R$ with values in $M$ and
the Poisson chain complex of $R$ with values in $M_t$.
This generalizes the results in \cite{LR} and \cite{Zhu}. In the
case that the Poisson structure is unimodular, the twisted
Poincar\'{e} duality reduces to the Poincar\'{e} duality in usual
sense. In a subsequent paper, a version of the twisted Poincar\'{e} duality between the Poisson homology and cohomology for
any smooth affine Poisson variety with trivial canonical bundle is proved \cite{LWW}.
A general duality theorem is proved by Huebschmann \cite{Hue1}
in the setting of Lie-Rinehart algebra.

The basic definitions concerning Poisson (co)homology are recalled
in Section 1. The modular derivation and log-Hamiltonian derivation
are discussed in Section 2. One example is given for a Polynomial
Poisson algebra with Hamiltonian modular derivation such that the
Poincar\'{e} duality between the Poisson homology and Poisson
cohomology does not hold. So, the definition of unimodular Poisson
structure is modified to be one with log-Hamiltonian modular
derivation. The twisted Poisson module structure by a Poisson
derivation is also given in Section 2. The main result is proved in
Section 3.

\section{Preliminaries and definitions}\label{sec:def.prelimi}
Let $k$ be a field of characteristic zero. All vector spaces and algebras are over $k$.

\subsection{Poisson algebras and modules}
\begin{defn} \cite{Lic, Wei0} A commutative $k$-algebra $R$ equipped with a
bilinear map $\{-, -\} : R \times R \to R$ is called a {\bf Poisson
algebra} if
\begin{enumerate}
\item $R$ with$\{-, -\} : R \times R \to R$  is a $k$-Lie algebra;
\item $\{-, -\} : R \times R \to R$  is a derivation in each argument with respect to the multiplication of $R$.
\end{enumerate}
\end{defn}

\begin{exam}
\noindent (1) [Poisson's classical bracket, 1809] Let $R = C^\infty(\mathbb{R}^{2})$ be
the ring of smooth functions over the real plane $\mathbb{R}^2$. For
any $f, g \in R$, let $\{f , g\} =\frac{\partial f} {\partial x}
\frac{\partial g} {\partial y} - \frac{\partial g}{\partial x}
\frac{\partial f} {\partial y}$. Then $R$ becomes a Poisson algebra.

\noindent
(2) \cite{GMP} Let $R=k[x_1, x_2, \cdots, x_n]$ be a polynomial algebra. Given
any $n-2$ polynomials $f_3, \cdots, f_n \in R$, and $u \in R$,
defining $\{-, -\}$ on $R$ by
$$\{f_1, f_2\}=u \frac{\de\!f_1 \wedge \de\!f_2 \wedge \de\!f_3 \wedge \cdots \wedge \de\!f_n}{\de\!x_1 \wedge \de\!x_2 \wedge \cdots \wedge
\de\!x_n} = uJ(f_1,f_2,f_3,\cdots,f_n),$$ where
$J(f_1,f_2,\cdots,f_n)=\det(\frac{\partial f_i}{\partial x_j})_{n
\times n}$ is the Jacobian determinant of $f_1, f_2, \cdots, f_n$.
Then $R$ is a Poisson algebra and it is called a {\bf generalized
Jacobian Poisson Structure} (GJPS) over $R$ given by $f_3, \cdots,
f_n$ and $u$. If $u=1$, then the Poisson structure is called a {\bf
Jacobian Poisson Structure} (JPS) given by $f_3, \cdots, f_n$.


%

\noindent
(3) Let $A$ be an algebra which is not necessarily commutative and $z\in
A$ be a non zero-divisor central element in $A$ such that $\bar{A}=A/zA$ is
commutative. For any $\bar{x}+zA, \bar{y}+zA \in \bar{A} \, (x, y \in A)$,
define
$$\{\bar{x}, \bar{y}\} = [x, y]/z +z A \in \bar{A}.$$
Then $\bar{A}$ with $\{,\}$ is a Poisson algebra. Sometimes,
$\bar{A}$ is called a semiclassical limit of $A$, $A$ is called a
quantization of $\bar{A}$ and  $A_q=A/(z-q)A$ is called a
deformation of $\bar{A}$, where $q \in k^*$.
\end{exam}

\begin{defn} \label{poisson-module}
 \cite{Oh} A right {\bf Poisson module} $M$ over the Poisson algebra $R$ is a k-vector
space $M$ endowed with two bilinear maps $\cdot$ and $\{-, -\}_M : M
\times R \to M $ such that
 \begin{enumerate}
\item $(M, \cdot)$ is a  module over the commutative algebra $R$;
\item $(M, \{-, -\}_M)$ is a right Lie-module over the Lie algebra $(R, \{-, -\})$;
\item $\{xa, b\}_M = \{x, b\}_M a + x\{a, b\}$ for any $a, b \in R$ and $x \in M$;
\item $\{x, ab\}_M = \{x, a\}_M  b + \{x, b\}_M a$ for any $a, b \in R$ and $x \in M$.
\end{enumerate}
\end{defn}

Left Poisson modules are defined similarly. Any Poisson
algebra $R$ is naturally a right or left Poisson module over itself.

\subsection{Poisson homology and cohomology}

Let M be a right Poisson module over the  Poisson algebra $R$. In
general, let $\Omega^{1}(R)$ be the K\"{a}hler differential module
of $R$ and $\Omega^{p}(R)$ be the $p$-th K\"{a}hler differential
forms. There is a  canonical chain complex
\begin{equation}
 \label{poison-chain-complex}
\cdots \longrightarrow M \otimes_R
\Omega^{p}(R)\stackrel{\partial}{\longrightarrow} M \otimes_R
\Omega^{p-1}(R)\stackrel{\partial}{\longrightarrow}\cdots
\stackrel{\partial}{\longrightarrow}M \otimes_R \Omega^{1}(R)
\stackrel{\partial}{\longrightarrow} M \otimes_R R \to 0
\end{equation}
where $\partial \colon M \otimes_R \Omega^{p}(R)\longrightarrow M
\otimes_R \Omega^{p-1}(R)$ is defined as:
\begin{multline*}
 \partial(m \otimes \de\!a_1\wedge \cdots \wedge \de\!a_p)=\sum_{i=1}^p(-1)^{i-1}\{m, a_i\}_M\otimes
  \de\!a_1\wedge \cdots \widehat{\de\!a_i} \cdots \wedge \de\!a_p \\
  {}+\sum_{1\le i<j\le p}(-1)^{i+j}m \otimes \de \{a_i, a_j\}
  \wedge \de\!a_1\wedge \cdots  \widehat{\de\!a_i} \cdots  \widehat{\de\!a_j} \cdots \wedge
  \de\!a_p.
\end{multline*}

\begin{defn} \cite{Mas}
The complex \eqref{poison-chain-complex} is called the {\bf Poisson
complex} of $R$ with values in $M$, and its $p$-th homology is
 called the $p$-th {\bf Poisson homology} of $R$ with values in
$M$, denoted by $\PH_p(R, M)$.
\end{defn}

In the case $M=R$, the Poisson homology is the original definition
of canonical homology given by Brylinsky \cite{Bry}.

Obviously, $\PH_0(R, M)=M/<\{\{m,a\}_M \mid m \in M, a \in R\}>$ and
$\PH_1(R, M)=\{ \sum_i m_i \otimes \de\!a_i \in M \otimes
\Omega^1(R)\mid \sum_i\{m_i, a_i\}=0 \}/<\{\{m,a\} \otimes
\de\!b-\{m, b\} \otimes \de\!a - m \otimes \de\!\{a, b\} \mid m \in
M, a, b \in R\}>$. Indeed, there is no good interpretation for
$\PH_0(R, M)$ and $\PH_1(R, M)$ as the Poisson cohomology which will
be defined shortly.

Next, let $\mathfrak{X}^p(M)$ be the set of all skew-symmetric $p$-fold
$k$-linear maps from $R$ to $M$ which are derivation in each
argument, that is,
$$\mathfrak{X}^p(M)=\{F \in \Hom_k(\wedge^p R, M) \mid F \, \textrm{is a derivation in each argument}\}.$$

Obviously, $\mathfrak{X}^0(M)=M$ and $\mathfrak{X}^1(M)=\der_k(R,
M)$. There is a canonical cochain complex£º
\begin{equation}
 \label{poison-cochain-complex}
0 \longrightarrow M \stackrel{\delta}{\longrightarrow} \mathfrak{X}^1(M)
\stackrel{\delta}{\longrightarrow} \cdots
\stackrel{\delta}{\longrightarrow} \mathfrak{X}^p(M)
\stackrel{\delta}{\longrightarrow} \mathfrak{X}^{p+1}(M) \longrightarrow
\cdots
\end{equation}
where $\delta \colon \mathfrak{X}^p(M)\longrightarrow \mathfrak{X}^{p+1}(M)$ is
defined as $F \mapsto \delta(F)$ with
\begin{multline*}
  \delta(F)(a_1\wedge \cdots \wedge a_p \wedge a_{p+1})
  =\sum_{i=1}^{p+1}(-1)^{i}\{F(a_1 \wedge \cdots \wedge \widehat{a_i}\wedge \cdots \wedge a_{p+1}), a_i\}_M \\
  {}+\sum_{1\le i<j\le p+1}(-1)^{i+j}
  F(\{a_i, a_j\}\wedge a_1\wedge \cdots \wedge \widehat{a_i}\wedge \cdots \wedge \widehat{a_j}\wedge \cdots \wedge
  a_{p+1}).
\end{multline*}

\begin{defn} \cite{Lic, Hue}
The complex \eqref{poison-cochain-complex} is called the {\bf
Poisson cochain complex} of $R$ with values in $M$, and its $p$-th
cohomology is called the $p$-th {\bf Poisson cohomology} of $R$ with
values in $M$, denoted by $\PH^p(R, M)$.
\end{defn}

Note $\delta^0: M \to \mathfrak{X}^1(M)$ is the map $m \mapsto -\{m,
-\}_M$ and $\delta^1: \mathfrak{X}^1(M) \to \mathfrak{X}^2(M)$ is
the map $F \mapsto \delta^1(F)$ with $\delta^1(F)(a \wedge
b)=-\{F(b), a\}_M + \{F(a), b\}_M - F(\{a, b\})$. The elements in
$\ker \delta^1$ are called {\bf Poisson derivations}, and the
elements $\{m, -\}_M \in \im \delta^0$  are called {\bf Hamiltonian
derivations}. Hence $\PH^0(R, M)=\{m \in M \mid \{m, a\}_M=0,
\forall\, a \in R\}$ consisting of all Casimir elements in $M$, and
$\PH^1(R, M)=\mbox{\{Poisson derivations\}}/\mbox{\{Hamiltonian
derivations\}}$.

\section{ Modular class and unimodular Poisson structure }

\subsection{Contraction map}
Let $R$ be a commutative algebra.
\begin{defn}\label{ctr2}
For any $P\in \mathfrak{X}^{p}(R)$, the {\bf contraction map}
$\iota_{P}: \Omega^{*}(R)\rightarrow \Omega^{*}(R)$ is a graded
$R$-linear map of degree $-p$, which is defined as $\iota_{P}:
\Omega^{k}(R)\rightarrow \Omega^{k-p}(R)$: when $k < p, \iota_{P}=
0$; when $k\geq p$ and $\omega =\de\! a_1\wedge \de\!
a_2\wedge\cdots\wedge\de\! a_k \in \Omega^{k}(R)$,
$$
\iota_{P}(\omega)= \sum\limits_{\sigma\in S_{p,k-p}}
 \sgn(\sigma) P(a_{\sigma(1)},\cdots, a_{\sigma(p)})
 \de\! a_{\sigma(p+1)}\wedge \de\! a_{\sigma(p+2)}\wedge\cdots\wedge\de\! a_{\sigma(k)},$$
where $S_{p, k-p}$ denotes the set of all $(p,k-p)$-shuffles, which
are the permutations $\sigma \in S_k$ such that $\sigma(1)< \cdots <
\sigma(p)$ and $\sigma(p+1)< \cdots < \sigma(k)$.
\end{defn}
\noindent For $P=a\in R = \mathfrak{X}^{0}(R)$, the contraction map has to
be understood as $\iota_{P}(\omega) = a \omega$.

Similar to the universal property of the K\"{a}hler differential
module $\Omega^{1}(R)$ of $R$, it is easy to see that
$\mathfrak{X}^{k}(R) \cong \Hom_R(\Omega^{k}(R), R)$ as left
$R$-modules for any $k$.
\begin{defn}\label{ctr3}
For any $Q \in \Omega^{p}(R)$, the {\bf contraction map} $\iota_Q:
\mathfrak{X}^{*}(R)\rightarrow \mathfrak{X}^{*}(R)$ is a graded
$R$-linear map of degree $-p$, which is defined as $\iota_Q:
\mathfrak{X}^{k}(R)\rightarrow \mathfrak{X}^{k-p}(R)$: when $k < p,
\iota_{Q}= 0$; when $k\geq p$ and $F \in \mathfrak{X}^{k}(R)$,
$$(\iota_Q F) (\de\! a_1 \wedge \de\! a_2 \wedge \cdots \wedge\de\! a_{k-p})
=F(\de\! a_1\wedge \de\! a_2\wedge\cdots\wedge\de\! a_{k-p} \wedge Q
).$$
\end{defn}

From Definitions \ref{ctr2} and  \ref{ctr3}, for any $P\in
\mathfrak{X}^{p}(R)$, $f\in R$ and $\omega\in \Omega^k(R)$,
\begin{equation} \label{ctr-equality}
\iota_P(\omega \wedge \de\! f)= \iota_{P}(\omega)\wedge \de\! f +
 (-1)^{k-p+1}\iota_{\iota_{\de\!f}(P)}(\omega).
\end{equation}

\subsection{Modular class}
Let $R$ be a smooth algebra of dimension $n$ with trivial canonical
bundle $\Omega^n(R)=R \eta$ where $\eta$ is a volume form of $R$.
Then $ \eta \wedge \de\! f=0$ and 
\begin{equation} \label{formula-1}
 P(f)\eta=\de\! f\wedge \iota_{P}(\eta) \, (\forall \, P \in
 \mathfrak{X}^1(A)).
\end{equation}
If further, $R$ is a Poisson algebra, with the Poisson structure
$\pi =\{-,-\}: R \wedge R \to R$, then
\begin{equation}\label{formula-2}
\iota_{H_f}(\eta)=-\de\! f\wedge \iota_{\pi}(\eta)
\end{equation}
where $H_f:=\{f,-\}: R \to R$ is the Hamiltonian derivation associated to $f$.

%
%

Now we can define the modular derivation for smooth Poisson algebras with trivial canonical bundle.

\begin{defn}\label{modular4} Let $R$ be a smooth Poisson algebra of
dimension $n$ with trivial canonical bundle $\Omega^n(R)=R \eta$
where $\eta$ is a volume form. The {\bf modular derivation} of $R$
with respect to $\eta$ is defined as the map $\phi_{\eta}: R \to R$
such that for any $f\in R$,
$$\phi_{\eta}(f)=\frac{L_{H_f}\eta}{\eta},$$ where
$L_{H_f}=[\de, \iota_{H_f}]$ is the Lie-derivation on $\Omega^\cdot(R)$.
\end{defn}

It follows from \eqref{formula-1} and  \eqref{formula-2} that
$\phi_\eta$ is not only a derivation, but also a Poisson derivation.

Before we give the definition of the modular class for Poisson algebras, let us say
a few words geometrically. Let $P$ be a Poisson manifold with
Poisson tensor $\pi$, and with a volume form $\mu$ (for example,
when $P$ is a real orientable manifold). The operator $\phi_\mu : f
\mapsto (L_{H_f}\mu)/\mu$ is a Poisson derivation and hence a vector
field, where $H_f=\{f, -\}$ is the Hamiltonian vector field of $f$
and $L_\xi =[\de, \iota_\xi]$ is the Lie derivative with respect to
the vector field $\xi$. $\phi_\mu$ is called the {\bf modular vector
field} of $P$. When the volume form changes, the corresponding
modular vector field differs by a Hamiltonian vector field. So its
class is a well-defined element in the $1$-st Poisson
cohomology, which is called the {\bf modular class} of $P$. If the
modular class vanishes, i.e., the modular vector field is a
Hamiltonian vector field, then the Poisson manifold is called {\bf
unimodular} \cite{Wei}. Xu \cite{Xu} proved a duality between the Poisson homology and
cohomology in the case of unimodular Poisson manifold.

In algebra case, when the volume form is changed, the corresponding Poisson
derivation is modified by a so called log-Hamiltonian derivation as we see in the following.
If $\lambda$ is another volume form of $R$, then $\lambda = u \eta$ for some
unit $u \in R$, and
\begin{align*}
\phi_{\lambda}(f)\lambda = & L_{H_f}(u \eta) =\de (\iota_{H_f}(u \eta))
= \de (u \, \iota_{H_f}(\eta))\\
=&\de\! u \wedge \iota_{H_f}(\eta) + u \, \de\! \iota_{H_f}(\eta)\\
=&H_f(u) \eta  + u \, \phi_{\eta}(f)\eta\\
=&u^{-1} H_f(u)\lambda + \phi_{\eta}(f)\lambda.
\end{align*}
It follows that $\phi_{\lambda} = \phi_{\eta} - u^{-1}H_u.$ The
Poisson derivation $ u^{-1}\{u, -\}: R \to R$ is called a {\bf
log-Hamiltonian derivation} of $R$ determined by the invertible
element $u$. The {\bf modular class} of $R$ is defined as the class
$\phi_{\eta}$ modulo log-Hamiltonian derivations (see \cite{Do}). If
the modular class is trivial, i.e., $\phi_{\eta}$ is a
log-Hamiltonian derivation, then $R$ is called {\bf unimodular}. If
the only invertible elements of $R$ are non-zero elements in $k 1_R$
as in polynomial algebras, then $R$ is unimodular in this sense if
and only if the modular derivation is zero.

\subsection{Polynomial Poisson algebras}

Now, we  consider polynomial Poisson algebras. Let $R=k[x_1, x_2,
\cdots, x_n]$ be a polynomial Poisson algebra with Poisson bracket
$\{-, -\}.$ Then  $\Omega^1(R)=\oplus_{i=1}^n R \de\! x_i$ and
$\Omega^n(R)=R \de\! x_1 \wedge \de\! x_2 \wedge \cdots \wedge \de\!
x_n$ with a volume form $\eta=\de\! x_1 \wedge \de\! x_2 \wedge
\cdots \wedge \de\! x_n$. We can describe the modular derivation
$\phi_{\eta}$ easily.
\begin{lem} \label{m-class-poly} Let $R=k[x_1, x_2,
\cdots, x_n]$ be a polynomial Poisson algebra with Poisson bracket
$\{-, -\}.$ The modular derivation $\phi_\eta$ is given by
 $$\phi_\eta(f)=\sum_{j=1}^n \frac{\partial \{f,
x_j\}}{\partial x_j}, \, \forall \,  f \in R.$$
\end{lem}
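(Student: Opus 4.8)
The plan is to compute $\phi_\eta(f)$ directly from Definition~\ref{modular4}, using the explicit formulas \eqref{formula-1} and \eqref{formula-2} adapted to the polynomial setting. Since $\eta = \de\!x_1 \wedge \cdots \wedge \de\!x_n$, for any derivation $P = \sum_i g_i \frac{\partial}{\partial x_i} \in \mathfrak{X}^1(R)$ one has $\iota_P(\eta) = \sum_{i=1}^n (-1)^{i-1} g_i \, \de\!x_1 \wedge \cdots \widehat{\de\!x_i} \cdots \wedge \de\!x_n$, so that $\de(\iota_P(\eta)) = \left(\sum_{i=1}^n \frac{\partial g_i}{\partial x_i}\right)\eta$; in other words, $\de \circ \iota_P$ acting on $\eta$ records the ordinary divergence of $P$. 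First I would record this elementary identity. Then I would apply it to $P = H_f = \{f, -\}$, whose $i$-th coefficient is $g_i = \{f, x_i\}$, because $H_f(x_i) = \{f, x_i\}$.

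Next, by Definition~\ref{modular4}, $\phi_\eta(f)\,\eta = L_{H_f}\eta = [\de, \iota_{H_f}]\eta = \de(\iota_{H_f}\eta) + \iota_{H_f}(\de\!\eta)$. Since $\de\!\eta = 0$ (it is an $(n+1)$-form over an $n$-variable polynomial ring, hence zero), the second term vanishes, and we are left with $\phi_\eta(f)\,\eta = \de(\iota_{H_f}\eta)$. Combining this with the divergence identity from the first step applied to $g_i = \{f, x_i\}$ gives
\begin{equation*}
\phi_\eta(f)\,\eta = \left(\sum_{j=1}^n \frac{\partial \{f, x_j\}}{\partial x_j}\right)\eta,
\end{equation*}
and since $\eta$ is a free generator of $\Omega^n(R)$ over $R$, we may cancel it to obtain the claimed formula.

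I do not expect a serious obstacle here; the argument is a direct unwinding of definitions once the divergence computation for $\iota_P(\eta)$ is in hand. The only point requiring a little care is the bookkeeping of signs in the expansion of $\iota_{H_f}(\eta)$ as an alternating sum of $(n-1)$-forms with a hat over the $i$-th slot, and checking that the exterior derivative redistributes these signs so that the surviving terms are exactly $\frac{\partial}{\partial x_i}\{f, x_i\}$ with a uniform plus sign — the sign $(-1)^{i-1}$ from the contraction is cancelled by the sign $(-1)^{i-1}$ produced when $\de\!x_i$ is moved back into the $i$-th position of the wedge. One could alternatively avoid this by invoking \eqref{formula-2}, writing $\iota_{H_f}(\eta) = -\de\!f \wedge \iota_\pi(\eta)$ and differentiating, but the direct divergence computation is cleaner and self-contained.
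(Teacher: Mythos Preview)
Your proposal is correct and follows essentially the same approach as the paper: both compute $L_{H_f}\eta = \de(\iota_{H_f}\eta)$ by expanding the contraction $\iota_{H_f}\eta$ as the alternating sum $\sum_j (-1)^{j-1}\{f,x_j\}\,\de\!x_1\wedge\cdots\widehat{\de\!x_j}\cdots\wedge\de\!x_n$, applying $\de$, and observing that the sign cancellation leaves exactly $\sum_j \frac{\partial\{f,x_j\}}{\partial x_j}\,\eta$. The only cosmetic difference is that you phrase the intermediate step as a general divergence identity for an arbitrary $P\in\mathfrak{X}^1(R)$ before specializing to $P=H_f$, and you make explicit the vanishing of $\iota_{H_f}(\de\eta)$, which the paper leaves implicit.
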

\begin{proof} By definition,
\begin{align*}
\phi_\eta(f)\eta&=L_{H_f}\eta=\de (\iota_{H_f}\de\! x_1 \wedge \de\!
x_2 \wedge \cdots \wedge \de\! x_n)\\
=&\sum_{j=1}^n (-1)^{j-1} \de (H_f(x_j) \de\! x_1 \wedge \cdots
\widehat{\de\! x_j} \cdots \wedge \de\! x_n)\\
=&\sum_{j=1}^n (-1)^{j-1} \de\! (\{f, x_j\} \de\! x_1 \wedge \cdots
\widehat{\de\! x_j} \cdots \wedge \de\! x_n)\\
=&\sum_{j=1}^n (-1)^{j-1} \sum_{l=1}^n \frac{\partial \{f,
x_j\}}{\partial x_l} \de\! x_l \wedge \de\! x_1 \wedge \cdots
\widehat{\de\! x_j} \cdots \wedge \de\! x_n\\
=&\sum_{j=1}^n \frac{\partial \{f, x_j\}}{\partial x_j}  \de\! x_1 \wedge \de\! x_2 \wedge \cdots \wedge \de\! x_n\\
=&\sum_{j=1}^n \frac{\partial \{f, x_j\}}{\partial x_j} \eta.
\end{align*}
It follows that $\phi_\eta(f)=\sum_{j=1}^n \frac{\partial \{f,
x_j\}}{\partial x_j}$.
\end{proof}

There is no non-trivial Poisson structure on $k[x]$.  The
Poisson structure on $k[x, y]$ is given by $\{x, y\} = h \in
k[x, y]$; and for any fixed polynomial $ h \in k[x, y]$, $\{x,
y\} = h$ gives a Poisson structure on $k[x, y]$.  If the
modular derivation of $R$ is a Hamiltonian derivation, then there is
some $g \in k[x, y]$, such that
\begin{align*}
\phi(x)&=\partial \{x,y\}/ {\partial y}= \partial h/ {\partial
y}=\{g, x\} =  \partial g/ {\partial y}\{y, x\},\\
\phi(y)&=\partial \{y,x\}/ {\partial x}=-
\partial h/ {\partial x}=\{g, y\} = \partial g/ {\partial x}\{x,
y\},
\end{align*}
that is, $$\partial h/ {\partial y}=-h \partial g/ {\partial
y}, \, \,
\partial h/ {\partial x}=- h \partial g/ {\partial x}.$$
Then $h$ should be a constant in $k$. If $h$ is a constant, the
modular derivation of $R$ is $0$. For polynomial algebras in three
variables, this is not the case.

\begin{exam}\label{non-unimodular-example}
Let $R=k[x, y, z]$ with the  Poisson structure given by
$$\{x, y\}=0, \{y, z\}=y, \{z, x\}=-1.$$
In fact, $\{\{x, y\}, z\} + \{\{y, z\}, x\} + \{\{z, x\}, y\} =\{y,
x\}=0$ implies the Jacobi identity. The modular derivation of $R$
with respect to the volume form $\eta = \de\! x \wedge \de\! y \wedge \de\! z$
is
$$\phi_\eta =\{-, x\}=\{-x, -\},$$
which is a Hamiltonian derivation. But $R$ is not unimodular in our
sense.
\end{exam}

It is not difficult to see that any Jacobian Poisson structure over
$R=k[x_1, x_2, \cdots, x_n]$ is unimodular (see also \cite{Prz}).
The converse is true for polynomial Poisson algebras in three
variables.

\begin{prop} \cite[Theorem 5]{Prz} Any unimodular Poisson structure over $R=k[x_1, x_2, x_3]$ is a
Jacobian Poisson structure.
\end{prop}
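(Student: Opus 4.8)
The plan is to establish a bijective correspondence between unimodular Poisson structures on $R=k[x_1,x_2,x_3]$ and Jacobian Poisson structures given by a single polynomial $f$, by identifying the Poisson bivector with a vector field and translating the unimodularity condition via Lemma \ref{m-class-poly}. First I would use the standard dictionary in dimension three: writing the Poisson bracket in terms of its structure functions $\pi_{ij}=\{x_i,x_j\}$, the bivector $\pi$ corresponds under the volume form $\eta=\de\!x_1\wedge\de\!x_2\wedge\de\!x_3$ to a vector field $V=\iota_\pi(\eta)$, explicitly $V=\pi_{23}\,\partial_{x_1}-\pi_{13}\,\partial_{x_2}+\pi_{12}\,\partial_{x_3}$, i.e.\ $V=(\pi_{23},\pi_{13},\pi_{12})$ up to signs. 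Conversely any vector field $V=(a,b,c)$ defines a skew bracket, and the content of the Jacobian Poisson structure is precisely that $\{x_i,x_j\}=J(x_i,x_j,f)$ means $V=\nabla f=\operatorname{grad} f$ for some polynomial $f$.

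Next I would reinterpret the two conditions. By Lemma \ref{m-class-poly}, $\phi_\eta(x_i)=\sum_j\partial\{x_i,x_j\}/\partial x_j$; a short computation shows that $\phi_\eta$ corresponds, under the same dictionary, to $\operatorname{curl}V$ (the rotation of the vector field $V$ attached to $\pi$). Hence $\phi_\eta=0$ exactly when $\operatorname{curl}V=0$. On the other hand, the Jacobi identity for $\pi$ in dimension three is equivalent to $\langle V,\operatorname{curl}V\rangle=0$ (this is the classical fact that an arbitrary bivector in dimension $3$ is Poisson iff $[\pi,\pi]=0$ translates to this single scalar equation), so once $\operatorname{curl}V=0$ the Jacobi identity holds automatically. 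Since the only units of $R$ lie in $k^{*}$, by the discussion preceding the proposition "unimodular" means $\phi_\eta=0$, i.e.\ $\operatorname{curl}V=0$.

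The remaining — and only substantive — step is the Poincar\'e-lemma-type assertion at the polynomial level: a polynomial vector field $V$ on $k[x_1,x_2,x_3]$ with $\operatorname{curl}V=0$ is a polynomial gradient, $V=\nabla f$ with $f\in R$. Over a field of characteristic zero this follows because the de Rham complex of $k[x_1,\dots,x_n]$ is exact in positive degrees (the algebraic Poincar\'e lemma), and $\operatorname{curl}V=0$ says the corresponding $1$-form $\omega_V=a\,\de\!x_1+b\,\de\!x_2+c\,\de\!x_3$ is closed, hence exact, so $\omega_V=\de\!f$; integrating term by term one even gets an explicit $f$. Then $\{x_i,x_j\}=J(x_i,x_j,f)$, so $\pi$ is the Jacobian Poisson structure determined by $f$, as claimed. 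I expect the main obstacle to be purely bookkeeping: matching signs and indices in the correspondence $\pi\leftrightarrow V$ and verifying cleanly that $\phi_\eta\leftrightarrow\operatorname{curl}V$ and that the Jacobi identity reduces to $\langle V,\operatorname{curl}V\rangle=0$; none of this is deep, but it must be done carefully in the $n=3$ case to invoke the algebraic Poincar\'e lemma correctly. (Since this is cited as \cite[Theorem 5]{Prz}, one may alternatively simply cite Przybylski's argument, but the above gives the self-contained route.)
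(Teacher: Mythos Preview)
Your proposal is correct and is essentially the same argument as the paper's, only repackaged in vector-calculus language: the paper writes out the three unimodularity equations $\partial_2 h_{12}+\partial_3 h_{13}=0$, $-\partial_1 h_{12}+\partial_3 h_{23}=0$, $\partial_1 h_{13}+\partial_2 h_{23}=0$ (your $\operatorname{curl}V=0$) and then integrates them one at a time by hand to produce the potential $w$, which is exactly the explicit proof of the algebraic Poincar\'e lemma you invoke as a black box. One small cleanup: $\iota_\pi(\eta)$ is a $1$-form, not a vector field, so it is cleaner to run the whole argument on the $1$-form $\omega=\pi_{23}\,\de\!x_1-\pi_{13}\,\de\!x_2+\pi_{12}\,\de\!x_3$ directly (closed $\Rightarrow$ exact $\Rightarrow$ $\omega=\de\!f$) and skip the vector-field/curl translation, which is where your sign bookkeeping would otherwise live.
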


\begin{proof} Suppose that $\{x_i, x_j\}=h_{ij} \in R \, (1 \leq i,  j \leq 3)$
and $(h_{ij})_{3 \times 3}$ is a skew-symmetric matrix. For any $f,
g \in R, \{f, g\}=\sum_{i, j =1}^3 \frac{\partial f}{\partial
x_i}\frac{\partial g}{\partial x_j}\{x_i, x_j\}$, i.e.,
$$
\begin{aligned}
\{f, g\}=&\frac{\partial f}{\partial x_1}\frac{\partial g}{\partial
x_2}h_{12}
+ \frac{\partial f}{\partial x_1}\frac{\partial g}{\partial x_3}h_{13}\\
-&\frac{\partial f}{\partial x_2}\frac{\partial g}{\partial
x_1}h_{12}
+\frac{\partial f}{\partial x_2}\frac{\partial g}{\partial x_3}h_{23}\\
- &\frac{\partial f}{\partial x_3}\frac{\partial g}{\partial
x_1}h_{13}
-\frac{\partial f}{\partial x_3}\frac{\partial g}{\partial x_2}h_{23}.\\
\end{aligned}
$$
$\{-,-\}$ is a Lie structure if and only if that
$$
\begin{aligned}
&\{\{x_1, x_2\}, x_3\}+\{\{x_2, x_3\}, x_1\}+\{\{x_3, x_1\}, x_2\}\\
=& \frac{\partial h_{12}}{\partial x_1}h_{13} + \frac{\partial
h_{12}}{\partial x_2}h_{23} - \frac{\partial h_{23}}{\partial
x_2}h_{12} - \frac{\partial h_{23}}{\partial x_3}h_{13} -
\frac{\partial h_{13}}{\partial x_1}h_{12} + \frac{\partial
h_{13}}{\partial x_3}h_{23}=0.\\
\end{aligned}
$$
The Poisson structure is unimodular if and only if
$$\frac{\partial h_{12}}{\partial x_2} + \frac{\partial h_{13}}{\partial
x_3}=0, - \frac{\partial h_{12}}{\partial x_1} + \frac{\partial
h_{23}}{\partial x_3}=0 \,\,\textrm{and} \, \, \frac{\partial
h_{13}}{\partial x_1} + \frac{\partial h_{23}}{\partial x_2}=0.
$$
Let $h_{12}=\frac{\partial u}{\partial x_3}$ for some $u \in R$. It
follows from $\frac{\partial h_{12}}{\partial x_2} + \frac{\partial
h_{13}}{\partial x_3}=0$ that $h_{13}=-\frac{\partial u}{\partial
x_2} + f(x_1, x_2)$ for some $f(x_1, x_2) \in k[x_1, x_2].$
Obviously, $h_{12}=\frac{\partial v}{\partial x_3}$ and
$h_{13}=-\frac{\partial v}{\partial x_2}$  for some $v \in R$. By
$\frac{\partial h_{13}}{\partial x_1} + \frac{\partial
h_{23}}{\partial x_2}=0$, $h_{23}=\frac{\partial v}{\partial x_1} +
g(x_1, x_3)$ for some $g(x_1, x_3) \in k[x_1, x_3].$ Since
$-\frac{\partial h_{12}}{\partial x_1} + \frac{\partial
h_{23}}{\partial x_3}=0$, $\frac{\partial g}{\partial x_3}=0$ and
$g(x_1, x_3) \in k[x_1]$. Hence, there is some $w \in R$ such that
$h_{12}=\frac{\partial w}{\partial x_3}, h_{13}=-\frac{\partial
w}{\partial x_2}$ and $h_{23}=\frac{\partial w}{\partial x_1}.$ It
follows that the Poisson structure on $R=k[x_1, x_2, x_3]$ is a
Jacobian Poisson structure given by $w \in R$.
\end{proof}

\subsection{Twisted Poisson module structure}

There is a Poincar\'{e} duality between the Poisson homology and
cohomology in the case of unimodular Poisson manifold \cite{Xu}.

For the polynomial Poisson algebra $R=k[x_1,x_2,\cdots,x_n]$ with  quadratic Poisson structure
 $\{x_i, x_j\}=a_{ij}x_i x_j$, where
$(a_{ij})\in M_n(k)$ is a skew-symmetric matrix, Launois-Richard
\cite[Theorem 3.4.2]{LR} proved a twisted Poincar\'{e} duality
between the Poisson cohomology $\PH^*(R,R)$ and the Poisson homology
$\PH_*(R,R_t)$, where $R_t$ is a twisted Poisson module structure of
$R$ given by the semi-classical limit.

Let $\mathfrak{g}$ be an $n$-dimensional $k$-Lie algebra with a
basis $\{x_1,x_2,\cdots,x_n\}$. Then the symmetric algebra
$S=S(\mathfrak{g}) \cong k[x_1,x_2,\cdots,x_n]$ is a Poisson algebra
with Poisson bracket defined by $\{x_i, x_j\}=[x_i, x_j]$. Zhu
\cite[Theorem 3.6]{Zhu} proved a twisted Poincar\'{e} duality
between the Poisson cohomology $\PH^*(S,S)$ and the Poisson homology
$\PH_*(S,S_t)$, where $S_t$ is a twisted Poisson module structure of
$S$ obtained from the Nakayama automorphism of the enveloping
algebra $U(\mathfrak{g}).$

The following proposition shows that any Poisson module can be twisted by a Poisson derivation.

\begin{prop} \label{twist-poisson-module-structure} Let $D \in \mathfrak{X}^1(R)$ be a Poisson derivation, and
$M$ be a right Poisson  $R$-module. Define a new bilinear map $\{-,
-\}_{M^D}: M \times R \to M$ as
 $$\{m, a\}_{M^D} : = \{m, a\}_M + m \cdot D(a).$$
Then the $R$-module $M$ with $\{-, -\}_{M^D}$ is a right Poisson
 $R$-module, which is called the {\bf twisted Poisson module}
of $M$ twisted by the Poisson derivation $D$, denoted by $M^D$.
\end{prop}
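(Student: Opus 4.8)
The plan is to verify the four axioms of Definition \ref{poisson-module} for the pair $(M, \{-,-\}_{M^D})$, using that $(M,\{-,-\}_M)$ is already a right Poisson module and that $D$ is a Poisson derivation, i.e.\ $D \in \ker\delta^1$, which unwinds (by the formula for $\delta^1$ recorded after the definition of Poisson cohomology) to the identity
$$D(\{a,b\}) = \{D(a), b\} + \{a, D(b)\} \quad \text{for all } a,b \in R,$$
together with the fact that $D$ is a $k$-derivation of $R$. Axiom (1) is free, since the underlying commutative $R$-module structure $\cdot$ is untouched. For axiom (3), one expands $\{xa,b\}_{M^D} = \{xa,b\}_M + (xa)D(b)$; applying axiom (3) for $M$ to the first term and rearranging against $\{x,b\}_{M^D}a + x\{a,b\}$ reduces the claim to $xa\,D(b) = x\,D(b)\,a$, which holds by commutativity of $R$ and associativity of the module action. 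Axiom (4) is similar but uses that $D$ is a derivation: $\{x,ab\}_{M^D} = \{x,ab\}_M + x\,D(ab) = \{x,a\}_M b + \{x,b\}_M a + x(D(a)b + D(b)a)$, and regrouping gives exactly $\{x,a\}_{M^D}b + \{x,b\}_{M^D}a$.

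The only axiom requiring the full strength of the hypothesis on $D$ is axiom (2): that $\{-,-\}_{M^D}$ makes $M$ a right Lie-module over $(R,\{-,-\})$, i.e.\
$$\{\{m,a\}_{M^D}, b\}_{M^D} - \{\{m,b\}_{M^D}, a\}_{M^D} = \{m, \{a,b\}\}_{M^D}$$
for all $m \in M$, $a,b \in R$. The strategy is to substitute the definition throughout and expand into a sum of terms, sorting them by "type": the purely $M$-bracket terms, which cancel by the Lie-module axiom for $M$; the terms of the form $\{m \cdot (\text{something}), -\}_M$ or $\{m,-\}_M \cdot (\text{something})$, which are handled by axiom (3) for $M$; and the purely multiplicative terms $m \cdot (\cdots)$, which collapse using commutativity and the Poisson-derivation identity for $D$ displayed above. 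Concretely, after using axiom (3) for $M$ to push module actions outside the $M$-bracket, the residual scalar factor multiplying $m$ is
$$\{D(a),b\} + \{a,D(b)\} - D(\{a,b\}),$$
which vanishes precisely because $D$ is a Poisson derivation.

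I expect the bookkeeping in axiom (2) to be the main obstacle — not conceptually, but in that one must carefully track the antisymmetrization in $a$ and $b$ and confirm that every stray term pairs off or is killed by one of the three mechanisms above; in particular the cross terms $\{m\cdot D(a), b\}_M$ versus $\{\{m,a\}_M, b\}_{M^D}$'s expansion must be matched against their $a \leftrightarrow b$ counterparts, and the term $x\{a,b\}$-type contributions from axiom (3) must be seen to be exactly what is needed to feed into the $D(\{a,b\})$ cancellation. Functoriality in $M$, should it be needed, is immediate since the twist does not alter morphisms: any morphism of Poisson $R$-modules $M \to N$ commutes with $\cdot$ and with $\{-,-\}_M$, hence with $\{-,-\}_{M^D}$.
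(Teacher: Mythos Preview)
Your proposal is correct and follows essentially the same approach as the paper's proof: verify axioms (3) and (4) directly from the derivation property of $D$ and the corresponding axioms for $M$, and establish axiom (2) by expanding, applying axiom (3) for $M$ to the terms $\{m\cdot D(a),b\}_M$, and reducing the residual $m$-coefficient to $\{D(a),b\}+\{a,D(b)\}-D(\{a,b\})=0$. The paper orders the computation slightly differently (it starts from $\{m,\{a,b\}\}_{M^D}$ and works forward rather than expanding both sides), but the ingredients and cancellations are identical.
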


\begin{proof}For any $m \in M$ and $ a, b \in R$, since $ D (\{a,b\}) = \{
D(a),b\} + \{a,D(b)\},$
\begin{align*}& \{m, \{a, b\}\}_{M^D}
= \{m,\{a, b\}\} +  m \cdot D(\{a,b\}) \\
= & \{\{m, a\}, b\} - \{\{m, b\}, a\} +  m \cdot \{ D(a),b\} + m \cdot\{a,D(b)\}\\
= &\{\{m, a\}, b\} + \{m \cdot D(a), b\} - \{m, b\}  \cdot D(a)\\
& - \{\{m, b\}, a\} - \{m \cdot D(b), a\} + \{m, a\} \cdot D(b)  \\
= &\{\{m, a\}, b\} + \{m \cdot D(a), b\} + (\{m, a\} + m
\cdot D(a)) \cdot D(b)\\
& - \{\{m, b\}, a\} - \{m \cdot D(b), a\} -
(\{m, b\} + m \cdot D(b)) \cdot D(a)  \\
=&\{\{m, a\}_{M^D}, b\} + \{m, a\}_{M^D} \cdot D(b) - \{\{m, b\}_{M^D}, a\} - \{m, b\}_{M^D} \cdot D(a)\\
= & \{\{m, a\}_{M^D}, b\}_{M^D} - \{\{m, b\}_{M^D}, a\}_{M^D}.
\end{align*}
Hence $(M, \{-, -\}_{M^D})$ is a right Lie-module over the Lie
algebra $(R; \{-, -\})$, that is, condition (2) in Definition
\ref{poisson-module} holds.

Next, we claim that the conditions (3) and (4) in Definition
\ref{poisson-module} hold. 
\begin{align*} &\{ma, b\}_{M^D}
= \{ma, b\} +  ma \cdot D(b)
= \{m, b\}a + m\{a, b\}  +  ma \cdot D(b) \\
=& [\{m, b\} + m \cdot D(b)]a + m\{a, b\}  =\{m, b\}_{M^D} a + m\{a,
b\};
\end{align*}
and
\begin{align*}& \{m, ab\}_{M^D} =\{m, ab\} +  m \cdot D(ab)
=  \{m, a\}b + \{m, b\}a  +  m \cdot [D(a)b + a
D(b)]\\
= &[\{m, a\} + m \cdot D(a)]b + [\{m, b\} +  m \cdot D(b)]a  = \{m,
a\}_{M^D} b + \{m, b\}_{M^D} a.
\end{align*}
By definition, $M$ with $\{-, -\}_{M^D}$ is a Poisson right
$R$-module.
\end{proof}

%

\begin{exam}\label{Launois+zhu} (1) Consider the polynomial algebra $R=k[x_1,x_2,\cdots,x_n]$
with the quadratic Poisson structure given by $\{x_i,
x_j\}=a_{ij}x_i x_j$, where $(a_{ij})\in M_n(k)$ is a skew-symmetric
matrix \cite{LR}. By Lemma \ref{m-class-poly},
$\phi_\eta(x_i)=(\sum_{j=1}^n a_{ij})x_i$. So the Poisson module
$M=R_t$ given in \cite[\S 3.1(6)]{LR} is exactly the twisted Poisson
$R$-module of $R$ twisted by the modular derivation $\phi_{\eta}$ as
defined in Proposition \ref{twist-poisson-module-structure}.

(2) Let $\mathfrak{g}$ be an $n$-dimensional $k$-Lie algebra with a
basis $\{x_1,x_2,\cdots,x_n\}$. Then the symmetric algebra
$S=S(\mathfrak{g}) \cong k[x_1,x_2,\cdots,x_n]$ is a Poisson algebra
with Poisson bracket defined by $\{x_i, x_j\}=[x_i, x_j]$. One can
check that in this case the modular derivation
$\phi_\eta(x_i)=\textrm{tr}(\textrm{ad}\, x_i)$. The Poisson module $S_t$
given in \cite[\S 3]{Zhu} is exactly the twisted Poisson $S$-module
of $S$ twisted by the modular derivation $\phi_{\eta}$.
\end{exam}

The main result in this paper which will be proved in next section
is that there is a twisted Poincar\'{e} duality $\PH^*(R, M) \cong
\PH_{n-*}(R, M_t)$ for any Poisson module $M$ over polynomial
Poisson algebra $R=k[x_1,x_2,\cdots,x_n]$, where $M_t$ is the
twisted Poisson module of $M$ twisted by the modular derivation of
$R$. This generalizes the main results in \cite{LR} and \cite{Zhu}.
If the Poisson structure of $R$ is unimodular, then it reduces to
the Poincar\'{e} duality $\PH^*(R, M) \cong \PH_{n-*}(R, M)$ for any
Poisson $R$-module $M$.

The following example shows that Poincar\'{e} duality does not hold
between the Poisson homology and cohomology even for polynomial
Poisson algebras in three variables with Hamiltonian modular
derivation.

\begin{exam} Let $R=k[x, y, z]$ with the Poisson structure as given in Example \ref{non-unimodular-example}.
Consider the Poisson chain complex:
$$0 \to \Omega^3(R)\stackrel{\partial}{\longrightarrow} \Omega^2(R)
\stackrel{\partial}{\longrightarrow} \Omega^1(R)\stackrel{\partial}{\longrightarrow} \Omega^0(R) \to 0,$$ which is
$$0 \to R \de\! x \wedge \de\! y \wedge \de\! z \stackrel{\partial}{\longrightarrow}
R \de\! x \wedge \de\! y  \oplus R \de\! y\wedge \de\! z \oplus R \de\!
z\wedge \de\! x \stackrel{\partial}{\longrightarrow}$$
$ R \de\! x
\oplus R \de\! y \oplus R \de\! z \stackrel{\partial}{\longrightarrow} R
\to 0,$ with
\begin{align*}
\partial & (f\de\! x \wedge \de\! y \wedge \de\! z)=- \frac{\partial f}{\partial z}
\de\! y \wedge \de\! z + \frac{\partial f}{\partial z}y \de\! x \wedge \de\!
z + ( \frac{\partial f}{\partial x} + \frac{\partial f}{\partial y}y
+ f) \de\! x \wedge \de\! y.
\end{align*}
It follows that $\PH_3(R,R)= \ker \partial \cong \{ f \in R \mid  \frac{\partial f}{\partial z}=0, \frac{\partial f}{\partial x} + \frac{\partial f}{\partial y}y + f =0\}$.

Suppose that $f = b_0(x) + b_1(x) y + \cdots +  b_m(x) y^m$. Then  $\frac{\partial f}{\partial x} + \frac{\partial f}{\partial y}y + f =0$ if and only if $b_i(x)=-b'_i(x) -i b_i(x) \, (i \geq 1)$ and $b_0(x) = -b'_0(x)$ if and only if $f=0$.
Hence $\PH_3(R,R)= 0$.

Next we compute the Poisson homology $\PH_3(R,R_t).$
Consider the Poisson chain complex:
$$0 \to R_t \otimes  \Omega^3(R)\stackrel{\partial}{\longrightarrow}  R_t \otimes \Omega^2(R)\stackrel{\partial}{\longrightarrow} R_t \otimes \Omega^1(R)\stackrel{\partial}{\longrightarrow}  R_t \otimes \Omega^0(R) \to 0,$$ which is
$0 \to R \de\! x \wedge \de\! y \wedge \de\! z
\stackrel{\partial}{\longrightarrow} R \de\! x \wedge \de\! y  \oplus R
\de\! y\wedge \de\! z \oplus R \de\! z\wedge \de\! x
\stackrel{\partial}{\longrightarrow}$\\ $R \de\! x \oplus R \de\! y
\oplus R \de\! z \stackrel{\partial}{\longrightarrow} R \to 0,$ with
\begin{align*}
\partial & (g\de\! x \wedge \de\! y \wedge \de\! z)=- \frac{\partial g}{\partial z} \de\! y \wedge \de\! z
+ \frac{\partial g}{\partial z}y \de\! x \wedge \de\! z + (
\frac{\partial g}{\partial x} + \frac{\partial g}{\partial y}y) \de\!
x \wedge \de\! y.
\end{align*}
It follows that $\PH_3(R,R_t) = \ker \partial  \cong \{ g \in R \mid  \frac{\partial g}{\partial z}=0,
\frac{\partial g}{\partial x} + \frac{\partial g}{\partial y}y =0\}$.
Suppose that $g = c_0 + y f(x,y) + x h(x)$. Then, on one hand
$\frac{\partial g}{\partial x}=y\frac{\partial f}{\partial x} + h(x) + x \frac{\partial h}{\partial x} $,
and on the other hand, $\frac{\partial g}{\partial x}=-y\frac{\partial g}{\partial y}$.
It follows that  $h(x) + x \frac{\partial h}{\partial x} =0$ and $h(x)=0$. Hence $g = c_0 + y f(x,y)$. Then
$\frac{\partial g}{\partial x} + \frac{\partial g}{\partial y}y =0$
if and only if $\frac{\partial f}{\partial x} + \frac{\partial f}{\partial y}y + f =0$,
if and only if $f=0$, as showed before.
Hence $g=c_0$ and $\PH_3(R,R_t) \cong k$.

It follows from our main result Theorem \ref{main-theorem},  $\PH^0(R, R) \cong \PH_3(R,R_t) \cong k$.
Hence  $\PH^0(R, R)$ is not isomorphic to $\PH_3(R,R)$.
\end{exam}

\section {Poincar\'{e} duality of polynomial Poisson algebras}

In this section, let $R=k[x_1, x_2, \cdots, x_n]$ be a polynomial
Poisson algebra with Poisson bracket $\{-, -\},$ and $M$ be an
arbitrary  right Poisson  $R$-module. By using the twisted Poisson
$R$-module $M_t$ of $M$ twisted by the modular derivation
$\phi_{\eta}$, where $\eta=\de\!x_1 \wedge \de\!x_2  \wedge \cdots \wedge \de\!x_n$ is the
volume from of $R$, we prove a twisted Poincar\'{e} duality
between the Poisson cohomology of $R$ with values in $M$ and the
Poisson homology of $R$ with values in the twisted Poisson module
$M_t$. This generalizes the duality theorem for unimodular Poisson algebras and also the main
results in \cite{LR} for quadratic Poisson algebras and in
\cite{Zhu} for linear Poisson algebras.

For any $R$-module $M$, let $\ddag^p_M \colon  \mathfrak{X}^p(M) \to M \otimes_R
\Omega^{n-p}(R)$ be the map 
\begin{equation}\label{definition-of-ddag}
F \mapsto \sum_{\sigma \in S_{p, n-p}} \sgn
(\sigma) F(x_{\sigma(1)}, \cdots, x_{\sigma(p)}) \otimes
\de\!x_{\sigma(p+1)} \wedge \cdots \wedge \de\!x_{\sigma(n)}.
\end{equation}

\begin{lem} \label{iso-chi-omega} For any $R$-module $M$, $\ddag^p_M \colon  \mathfrak{X}^p(M) \to M \otimes_R
\Omega^{n-p}(R)$ is an isomorphism.
\end{lem}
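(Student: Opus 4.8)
The plan is to recognize $\ddag^p_M$ as a base-changed version of a purely module-theoretic isomorphism together with the standard identification $\mathfrak{X}^p(R)\cong\Hom_R(\Omega^p(R),R)$ already noted in the excerpt. Concretely, since $R=k[x_1,\dots,x_n]$, the module $\Omega^p(R)$ is free with basis $\{\de\!x_{i_1}\wedge\cdots\wedge\de\!x_{i_p}\mid i_1<\cdots<i_p\}$, and $\mathfrak{X}^p(R)\cong\Hom_R(\Omega^p(R),R)$ is free with the dual basis $\{e_{i_1\cdots i_p}\}$, where $e_{i_1\cdots i_p}$ sends $\de\!x_{i_1}\wedge\cdots\wedge\de\!x_{i_p}$ to $1$ and all other basis $p$-forms to $0$. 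I would first check that for $M=R$ the map $\ddag^p_R$ sends $e_{I}$ (for $I=\{i_1<\cdots<i_p\}$) to $\pm\,\de\!x_{I^c}$, where $I^c$ is the complementary index set and the sign is $\sgn$ of the shuffle that interleaves $I$ and $I^c$; hence $\ddag^p_R$ carries a free basis bijectively to a free basis (up to signs), so it is an isomorphism of $R$-modules.

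Next I would reduce the general case to $M=R$. A skew-symmetric $p$-derivation $F\colon R\to M$ is, by the universal property of Kähler differentials in the alternating setting, the same datum as an $R$-linear map $\Omega^p(R)\to M$; that is, $\mathfrak{X}^p(M)\cong\Hom_R(\Omega^p(R),M)\cong M\otimes_R\Hom_R(\Omega^p(R),R)=M\otimes_R\mathfrak{X}^p(R)$, the middle isomorphism holding because $\Omega^p(R)$ is finitely generated projective (indeed free) over $R$. Under these identifications $\ddag^p_M$ becomes $\id_M\otimes\ddag^p_R$, the base change of $\ddag^p_R$ along $R\to M\otimes_R(-)$. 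Since $\ddag^p_R$ is an isomorphism of $R$-modules and tensoring is functorial, $\id_M\otimes\ddag^p_R$ is an isomorphism, and therefore so is $\ddag^p_M$. Functoriality in $M$ is immediate from this description, which also records the naturality that the theorem statement will need downstream.

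Alternatively, and perhaps more transparently, one can just write the inverse explicitly: define $\heartsuit^p_M\colon M\otimes_R\Omega^{n-p}(R)\to\mathfrak{X}^p(M)$ on basis elements by sending $m\otimes\de\!x_{j_1}\wedge\cdots\wedge\de\!x_{j_{n-p}}$ (with $j_1<\cdots<j_{n-p}$) to the $p$-derivation that sends $x_{i_1}\wedge\cdots\wedge x_{i_p}$ (with $i_1<\cdots<i_p$) to $\sgn(\sigma)\,m$ when $\{i_1,\dots,i_p\}$ is exactly the complement of $\{j_1,\dots,j_{n-p}\}$, with $\sigma$ the corresponding shuffle, and to $0$ otherwise, extended so as to be a derivation in each argument (which is forced once values on the $x_i$ are fixed, since $R$ is a polynomial algebra). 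One then verifies $\ddag^p_M\circ\heartsuit^p_M$ and $\heartsuit^p_M\circ\ddag^p_M$ are the identity by a direct computation on basis elements, which amounts to the bookkeeping that a shuffle and the complementary shuffle compose to a sign that squares to $1$.

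The main obstacle is not conceptual but notational: keeping the shuffle signs straight, specifically verifying that the sign attached to the shuffle separating $I$ from $I^c$ is consistent between $\ddag^p_M$ and its claimed inverse, and confirming that a $p$-fold skew-symmetric derivation $R\to M$ is genuinely determined by its values $F(x_{i_1},\dots,x_{i_p})$ on tuples of variables (this is where the polynomial hypothesis is used — it is the alternating analogue of the fact that a derivation of $k[x_1,\dots,x_n]$ into a module is determined by its values on the generators). Once these two points are pinned down, the isomorphism and its functoriality in $M$ follow formally from the freeness of $\Omega^\bullet(R)$.
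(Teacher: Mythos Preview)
Your proposal is correct and follows essentially the same route as the paper: both reduce to the case $M=R$ via the chain $\mathfrak{X}^p(M)\cong\Hom_R(\Omega^p(R),M)\cong M\otimes_R\Hom_R(\Omega^p(R),R)\cong M\otimes_R\mathfrak{X}^p(R)$ (using that $\Omega^p(R)$ is finitely generated projective), and then verify that the composite is exactly $\ddag^p_M=\id_M\otimes\ddag^p_R$. The only cosmetic difference is that the paper invokes the contraction description $\ddag^p_R(F)=\iota_F\eta$ with inverse $Q\mapsto\iota_Q\eta^*$ to dispatch the $M=R$ case, whereas you check it directly on the dual basis; your optional explicit inverse $\heartsuit^p_M$ is not in the paper but is a harmless elaboration.
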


\begin{proof}  It is well known that $\ddag^p_R:
\mathfrak{X}^p(R) \cong \Omega^{n-p}(R), F \mapsto \iota_F\eta$ is an isomorphism, with the inverse
$\Omega^{n-p}(R) \to \mathfrak{X}^p(R), Q \mapsto \iota_Q \eta^*$,
where $\eta^*=\partial/ {\partial x_1} \wedge  \cdots \wedge \partial/ {\partial x_n}
\in \mathfrak{X}^n(R) \cong \Hom_R(\Omega^n(R) ,R)$.

Since $\Omega^{p}(R)$ is a finitely generated projective $R$-module
($p \geq 0$), with a dual basis
$$\{\de\!x_{i_1} \wedge \cdots \wedge \de\!x_{i_p},
\partial/ {\partial x_{i_1}} \wedge \cdots \wedge \partial/ {\partial x_{i_p}}
\}_{1 \leq i_1 < \cdots < i_p \leq n},  $$
 there are canonical isomorphisms of left $R$-modules
\begin{align*}
\mathfrak{X}^p(M) & \cong \Hom_R(\Omega^p(R), M)\\
 & \cong    M \otimes_R \Hom_R(\Omega^p(R), R)\\
 & \cong M \otimes_R  \mathfrak{X}^p(R) \\
 & \cong  M \otimes_R \Omega^{n-p}(R),
\end{align*}
where the last isomorphism is $\id_M \otimes \ddag^p_R$.
The isomorphism  $ \mathfrak{X}^p(M) \to M \otimes_R
\Omega^{n-p}(R)$ given by the composition
$$ \mathfrak{X}^p(M) \to  M \otimes_R \Hom_R(\Omega^p(R), R)   \to M
\otimes_R \Omega^{n-p}(R),$$ is
\begin{align*} F & \mapsto \sum_{1 \leq i_1 < \cdots < i_p \leq n} F(x_{i_1}, \cdots, x_{i_p}) \otimes
\partial/ {\partial x_{i_1}} \wedge \cdots \wedge \partial/ {\partial x_{i_p}}\\
&\mapsto \sum_{ i_1 < \cdots < i_p } \sum_{\sigma \in S_{p, n-p}} \sgn
(\sigma) F(x_{i_1}, \cdots, x_{i_p}) (\partial/ {\partial x_{i_1}}
\wedge \cdots \wedge \partial/ {\partial x_{i_p}})
(x_{\sigma(1)}, \cdots, x_{\sigma(p)}) \otimes \\
& \quad \quad \de\!x_{\sigma(p+1)} \wedge \cdots \wedge \de\!x_{\sigma(n)}\\
& =  \sum_{\sigma \in S_{p, n-p}} \sgn (\sigma) F(x_{\sigma(1)},
\cdots, x_{\sigma(p)}) \otimes  \de\!x_{\sigma(p+1)} \wedge \cdots
\wedge \de\!x_{\sigma(n)}
\end{align*}
which is exactly $\ddag^p_M$.
\end{proof}

In the following, let $\dag^p_M=(-1)^{\frac{p(p+1)}{2}}\ddag^p_M$.

\begin{prop}\label{commu-dia}
Let $R=k[x_1, x_2, \cdots, x_n]$ be a polynomial Poisson algebra,
$M$ be a right Poisson $R$-module and $M_t$ be the twisted Poisson module of
 $M$ twisted by the modular derivation $\phi_{\eta}$.
Then the following diagram is commutative.
\begin{equation} \label{maindiagram}
\xymatrix{
   \mathfrak{X}^p(M) \ar[d]_{\dag^p_M} \ar[r]^{{\delta}^p}
        &   \mathfrak{X}^{p+1}(M)  \ar[d]_{\dag^{p+1}_M}    \\
  M_t \otimes_R \Omega^{n-p}(R)    \ar[r]^{{\partial}_{n-p}}
        & M_t \otimes_R \Omega^{n-p-1}(R).
     }
\end{equation}
\end{prop}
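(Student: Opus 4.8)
The plan is to verify the commutativity of \eqref{maindiagram} by a direct computation: take an arbitrary $F \in \mathfrak{X}^p(M)$, push it both ways around the square, and match the results term by term. Concretely, I would compute $\partial_{n-p}(\dag^p_M(F))$ and $\dag^{p+1}_M(\delta^p(F))$ as explicit elements of $M_t \otimes_R \Omega^{n-p-1}(R)$, each expressed in the monomial basis $\de\!x_{j_1} \wedge \cdots \wedge \de\!x_{j_{n-p-1}}$, and show that the coefficients of each basis form agree. Since $\dag^p_M = (-1)^{p(p+1)/2}\ddag^p_M$, the sign twist between $\dag^p_M$ and $\dag^{p+1}_M$ contributes a fixed factor $(-1)^{p(p+1)/2 - (p+1)(p+2)/2} = (-1)^{p+1}$, which will need to be carried through and reconciled with the signs produced by $\partial$ and $\delta$; keeping this bookkeeping straight is one of the delicate points.

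Expanding $\ddag^p_M(F) = \sum_{\sigma \in S_{p,n-p}} \sgn(\sigma)\, F(x_{\sigma(1)},\dots,x_{\sigma(p)}) \otimes \de\!x_{\sigma(p+1)} \wedge \cdots \wedge \de\!x_{\sigma(n)}$ and applying $\partial = \partial_{n-p}$, I use the definition of $\partial$ on the module $M_t$: this produces two families of terms, one involving $\{F(\cdots), x_{\sigma(p+i)}\}_{M_t} = \{F(\cdots), x_{\sigma(p+i)}\}_M + F(\cdots)\,\phi_\eta(x_{\sigma(p+i)})$, and one involving $F(\cdots) \otimes \de\!\{x_{\sigma(i)}, x_{\sigma(j)}\}$. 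The crucial input is Lemma \ref{m-class-poly}, $\phi_\eta(f) = \sum_j \partial\{f, x_j\}/\partial x_j$; the extra ``modular'' terms coming from the $m \cdot D(a)$ part of the twisted bracket must be shown to cancel exactly against the terms that arise when the exterior derivative $\de\!\{x_i, x_j\}$ is expanded in the $\de\!x_l$ basis and $\de\!x_l$ lands back among the wedge factors — i.e.\ the ``diagonal'' contributions $\partial\{x_i,x_j\}/\partial x_j$ that would otherwise not match anything on the cohomology side. This is essentially the same cancellation mechanism that appears in Lemma \ref{m-class-poly}'s proof, lifted to the chain level.

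On the other side, $\delta^p(F)(x_{i_1} \wedge \cdots \wedge x_{i_{p+1}})$ has its own two sums — one of Lie-module type $\{F(\cdots \widehat{x_{i_a}} \cdots), x_{i_a}\}_M$ and one of the form $F(\{x_{i_a}, x_{i_b}\} \wedge \cdots)$ — and after applying $\ddag^{p+1}_M$ and re-shuffling the $(p+1, n-p-1)$-shuffles against the $(p, n-p)$-shuffles appearing on the other route, the two expansions should line up once we account for the combinatorics of inserting one more index into a shuffle. The main obstacle I anticipate is precisely this shuffle-reindexing bookkeeping together with the sign reconciliation: one must carefully relate the $(p, n-p-1)$-shuffles implicit in $\partial_{n-p} \circ \ddag^p_M$ (which pick $p$ indices for $F$, one index $x_{\sigma(p+i)}$ to bracket out, and leave $n-p-1$) with the $(p+1, n-p-1)$-shuffles in $\ddag^{p+1}_M \circ \delta^p$ (which pick $p+1$ indices for the new cochain $\delta^p(F)$), and check that the induced signs — $\sgn(\sigma)$, the $(-1)^{i-1}$ and $(-1)^{i+j}$ from $\partial$, the $(-1)^i$ and $(-1)^{i+j}$ from $\delta$, and the global $(-1)^{p+1}$ from the $\dag$-normalization — all conspire. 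I would organize the computation by first treating the ``bracket-out'' terms (first sum of $\partial$ versus first sum of $\delta$) and separately the ``$\de\!\{x_i,x_j\}$''/``$F(\{x_i,x_j\}\wedge\cdots)$'' terms (second sums), handling the leftover modular terms via Lemma \ref{m-class-poly} as indicated above.
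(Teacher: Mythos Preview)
Your proposal is correct and follows essentially the same route as the paper's proof: a direct term-by-term expansion of both compositions, organized via shuffle bijections (the paper reindexes through $S_{p,1,n-p-1}$ and $S_{p-1,2,1,n-p-2}$), with Lemma~\ref{m-class-poly} supplying exactly the identity that matches the modular piece of the twisted bracket against the diagonal partial-derivative terms. The one refinement you should anticipate is that the second sums on each side must themselves be split further---according to whether the index $l$ in $\partial\{x_i,x_j\}/\partial x_l$ lands among the arguments of $F$ or among the remaining wedge factors---and the modular contribution does not cancel within the $\partial$-side but rather combines with one of these sub-pieces to match a sub-piece on the $\delta$-side; the paper isolates this as Lemmas~\ref{V1=L2+N1} and~\ref{V2=N2}.
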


\begin{proof} Suppose $f \in   \mathfrak{X}^p(M)$. Then, by \eqref{poison-cochain-complex}
 and \eqref{definition-of-ddag},
\begin{align*}
 &(\dag^{p+1}_M \cdot \delta^p) (f)= \dag^{p+1}_M (\delta^p (f))\\
 =&(-1)^{\frac{(p+1)(p+2)}{2}}\sum_{\substack{1 \leq i \leq p+1\\\sigma \in S_{p+1, n-p-1} }} \sgn
(\sigma)(-1)^{i}\{f(x_{\sigma(1)}, \cdots, \widehat{x_{\sigma(i)}}
\cdots, x_{\sigma(p+1)}), x_{\sigma(i)}\} \otimes \\
& \quad \de\!x_{\sigma(p+2)} \wedge \cdots \wedge \de\!x_{\sigma(n)} \\
& + (-1)^{\frac{(p+1)(p+2)}{2}} \sum_{\substack{1
\leq i < j \leq p+1\\\sigma \in S_{p+1, n-p-1}}} \sgn
(\sigma)(-1)^{i+j} \\
& f(\{x_{\sigma(i)}, x_{\sigma(j)}\}, x_{\sigma(1)}, \cdots,
\widehat{x_{\sigma(i)}}, \cdots, \widehat{x_{\sigma(j)}}, \cdots,
x_{\sigma(p+1)}) \otimes \de\!x_{\sigma(p+2)} \wedge \cdots \wedge
\de\!x_{\sigma(n)}.
\end{align*}
Denote the first term  by $U$ and the second one by $V$.

Let $S_{p, 1, n-p-1}$ be the set of all $(p,1,n-p-1)$-shuffles,
which are all the permutations $\sigma \in S_n$ such that
$\sigma(1)< \cdots < \sigma(p)$ and $\sigma(p+2)< \cdots <
\sigma(n)$. For any $\sigma \in S_{p+1, n-p-1}$ and any $i \in \{1,
2, \cdots, p+1\}$, if we move $\sigma(i)$ backward to the position
$p+1$, then we get a shuffle in $S_{p, 1, n-p-1}$. This gives the following one
to one correspondence
$$S_{p+1, n-p-1} \times \{1, 2, \cdots, p+1\} \to S_{p, 1, n-p-1},
(\sigma, i) \mapsto \tau = \sigma \alpha_i,$$ where $\alpha_i=(i,
i+1, \cdots, p+1)$.
Note that $\sgn (\alpha_i) =(-1)^{p-i+1}$. Then
\begin{align*}
 U &=(-1)^{\frac{(p+1)(p+2)}{2}} \sum_{\substack{1 \leq i \leq
p+1\\\sigma \in S_{p+1, n-p-1}}}(-1)^{p+1}
\sgn (\sigma \alpha_i)\\
&\quad \{f(x_{\sigma\alpha_i(1)}, \cdots, x_{\sigma\alpha_i(i-1)},
x_{\sigma\alpha_i(i)}, \cdots, x_{\sigma\alpha_i(p)}),
x_{\sigma\alpha_i(p+1)}\} \otimes\\
&\quad \de\!x_{\sigma\alpha_i(p+2)} \wedge \cdots \wedge \de\!x_{\sigma\alpha_i(n)}\\
&=(-1)^{\frac{p(p+1)}{2}}\sum_{\tau\in S_{p,1,
n-p-1}}\sgn(\tau)\{f(x_{\tau(1)},\cdots,x_{\tau(p}),
x_{\tau(p+1)}\}\otimes \\
& \quad \de\!x_{\tau(p+2)}\wedge\cdots\wedge \de\!x_{\tau(n)}.
\end{align*}

Since $f$ is a derivation
in each argument, $f(X, y_2, \cdots, y_n) = \sum_{l=1}^n \frac{\partial
{X}}{\partial x_l} f(x_l, y_2, \cdots, y_n)$. So,
\begin{align*}
V=&(-1)^{\frac{(p+1)(p+2)}{2}}\sum_{\substack{1
\leq i < j \leq p+1\\\sigma \in S_{p+1, n-p-1}}} \sgn (\sigma)(-1)^{i+j} \sum_{l=1}^n
\frac{\partial
{\{x_{\sigma(i)}, x_{\sigma(j)}\}}}{\partial x_l} \\
& \quad f(x_l, x_{\sigma(1)}, \cdots, \widehat{x_{\sigma(i)}},
\cdots, \widehat{x_{\sigma(j)}}, \cdots, x_{\sigma(p+1)}) \otimes
\de\!x_{\sigma(p+2)} \wedge \cdots \wedge \de\!x_{\sigma(n)}\\
=&(-1)^{\frac{(p+1)(p+2)}{2}}\sum_{\substack{1
\leq i < j \leq p+1\\\sigma \in S_{p+1, n-p-1}}} \sgn (\sigma)(-1)^{i+j} \sum_{l=1}^n
\frac{\partial
{\{x_{\sigma(i)}, x_{\sigma(j)}\}}}{\partial x_{\sigma (l)}} \\
& \quad  f(x_{\sigma(l)}, x_{\sigma(1)}, \cdots,
\widehat{x_{\sigma(i)}}, \cdots, \widehat{x_{\sigma(j)}}, \cdots,
x_{\sigma(p+1)}) \otimes
\de\!x_{\sigma(p+2)} \wedge \cdots \wedge \de\!x_{\sigma(n)}\\
=&(-1)^{\frac{(p+1)(p+2)}{2}}\sum_{\substack{1
\leq i < j \leq p+1\\\sigma \in S_{p+1, n-p-1}}} \sgn (\sigma)(-1)^{j-1} \frac{\partial
{\{x_{\sigma(i)}, x_{\sigma(j)}\}}}{\partial x_{\sigma (i)}} \\
& \quad  f(x_{\sigma(1)}, \cdots, x_{\sigma(i)}, \cdots,
\widehat{x_{\sigma(j)}}, \cdots, x_{\sigma(p+1)}) \otimes
\de\!x_{\sigma(p+2)} \wedge \cdots \wedge \de\!x_{\sigma(n)}\\
&+ (-1)^{\frac{(p+1)(p+2)}{2}}\sum_{\substack{1
\leq i < j \leq p+1\\\sigma \in S_{p+1, n-p-1}}} \sgn (\sigma)(-1)^{i} \frac{\partial
{\{x_{\sigma(i)}, x_{\sigma(j)}\}}}{\partial x_{\sigma (j)}} \\
& \quad f(x_{\sigma(1)}, \cdots,  \widehat{x_{\sigma(i)}}, \cdots,
x_{\sigma(j)}, \cdots, x_{\sigma(p+1)}) \otimes
\de\!x_{\sigma(p+2)} \wedge \cdots \wedge \de\!x_{\sigma(n)}\\
&+ (-1)^{\frac{(p+1)(p+2)}{2}}\sum_{\substack{1
\leq i < j \leq p+1\\\sigma \in S_{p+1, n-p-1}}} \sgn (\sigma)(-1)^{i+j} \sum_{l=p+2}^n
\frac{\partial
{\{x_{\sigma(i)}, x_{\sigma(j)}\}}}{\partial x_{\sigma (l)}} \\
& \quad f(x_{\sigma(l)}, x_{\sigma(1)}, \cdots,
\widehat{x_{\sigma(i)}}, \cdots, \widehat{x_{\sigma(j)}}, \cdots,
x_{\sigma(p+1)}) \otimes
\de\!x_{\sigma(p+2)} \wedge \cdots \wedge \de\!x_{\sigma(n)}\\
=&(-1)^{\frac{(p+1)(p+2)}{2}}\sum_{\sigma  \in S_{p+1, n-p-1}} \sgn
(\sigma) \sum_{j=2}^{p+1} \sum_{i=1}^{j-1} (-1)^{j-1} \frac{\partial
{\{x_{\sigma(i)}, x_{\sigma(j)}\}}}{\partial x_{\sigma (i)}} \\
& \quad  f(x_{\sigma(1)}, \cdots, x_{\sigma(i)}, \cdots,
\widehat{x_{\sigma(j)}}, \cdots, x_{\sigma(p+1)}) \otimes
\de\!x_{\sigma(p+2)} \wedge \cdots \wedge \de\!x_{\sigma(n)}\\
&+(-1)^{\frac{(p+1)(p+2)}{2}}\sum_{\sigma \in S_{p+1, n-p-1}} \sgn
(\sigma) \sum_{i=1}^p \sum_{j=i+1}^{p+1} (-1)^{i} \frac{\partial
{\{x_{\sigma(i)}, x_{\sigma(j)}\}}}{\partial x_{\sigma (j)}} \\
& \quad f(x_{\sigma(1)}, \cdots,  \widehat{x_{\sigma(i)}}, \cdots,
x_{\sigma(j)}, \cdots, x_{\sigma(p+1)}) \otimes
\de\!x_{\sigma(p+2)} \wedge \cdots \wedge \de\!x_{\sigma(n)}\\
&+(-1)^{\frac{(p+1)(p+2)}{2}}\sum_{\substack{1
\leq i < j \leq p+1\\\sigma \in S_{p+1, n-p-1}}} \sgn (\sigma)(-1)^{i+j} \sum_{l=p+2}^n
\frac{\partial
{\{x_{\sigma(i)}, x_{\sigma(j)}\}}}{\partial x_{\sigma (l)}} \\
& \quad  f(x_{\sigma(l)}, x_{\sigma(1)}, \cdots,
\widehat{x_{\sigma(i)}}, \cdots, \widehat{x_{\sigma(j)}}, \cdots,
x_{\sigma(p+1)}) \otimes
\de\!x_{\sigma(p+2)} \wedge \cdots \wedge \de\!x_{\sigma(n)}\\
=&(-1)^{\frac{(p+1)(p+2)}{2}}\sum_{\sigma  \in S_{p+1, n-p-1}} \sgn
(\sigma) \sum_{i=1}^{p+1} (-1)^{i} \sum_{j=1}^{p+1}\frac{\partial
{\{x_{\sigma(i)}, x_{\sigma(j)}\}}}{\partial x_{\sigma (j)}} \\
&\quad f(x_{\sigma(1)}, \cdots, \widehat{x_{\sigma(i)}}, \cdots,
x_{\sigma(p+1)}) \otimes \de\!x_{\sigma(p+2)} \wedge \cdots \wedge
\de\!x_{\sigma(n)}\\
&+(-1)^{\frac{(p+1)(p+2)}{2}}\sum_{\substack{1
\leq i < j \leq p+1\\\sigma \in S_{p+1, n-p-1}}} \sgn (\sigma)(-1)^{i+j} \sum_{l=p+2}^n
\frac{\partial
{\{x_{\sigma(i)}, x_{\sigma(j)}\}}}{\partial x_{\sigma (l)}} \\
& \quad f(x_{\sigma(l)}, x_{\sigma(1)}, \cdots,
\widehat{x_{\sigma(i)}}, \cdots, \widehat{x_{\sigma(j)}}, \cdots,
x_{\sigma(p+1)}) \otimes
\de\!x_{\sigma(p+2)} \wedge \cdots \wedge \de\!x_{\sigma(n)}\\
=&V_1 + V_2,
\end{align*}
where $V_1$ is the first term, and $V_2$ is the second one of the
last expression.

As we did for $U$,
\begin{align*}
V_1=&(-1)^{\frac{(p+1)(p+2)}{2}}\sum_{\sigma \in S_{p+1, n-p-1}}
\sgn (\sigma) \sum_{i=1}^{p+1} (-1)^{i}
\sum_{j=1}^{p+1}\frac{\partial
{\{x_{\sigma(i)}, x_{\sigma(j)}\}}}{\partial x_{\sigma (j)}} \\
&\quad f(x_{\sigma(1)}, \cdots, \widehat{x_{\sigma(i)}}, \cdots,
x_{\sigma(p+1)}) \otimes \de\!x_{\sigma(p+2)} \wedge \cdots \wedge
\de\!x_{\sigma(n)}\\
&\textrm {can be rewritten as}\\
V_1 =&(-1)^{\frac{p(p+1)}{2}}\sum_{\tau \in S_{p, 1, n-p-1}} \sgn
(\tau)
 \sum_{j=1}^{p+1}\frac{\partial
{\{x_{\tau (p+1)}, x_{\tau(j)}\}}}{\partial x_{\tau (j)}} \\
& \quad f(x_{\tau(1)}, \cdots, x_{\tau(p)}) \otimes \de\!x_{\tau(p+2)}
\wedge
\cdots \wedge \de\!x_{\tau(n)}.\\
\end{align*}

On the other hand, by using \eqref{poison-chain-complex},
\eqref{definition-of-ddag} and $\de\!\{x_{\sigma(i)},
x_{\sigma(j)}\}=\sum_{l=1}^n \frac{\partial {\{x_{\sigma(i)},
x_{\sigma(j)}\}}}{\partial x_l} \de\!x_l$,
\begin{align*}
&(\partial_{n-p} \cdot \dag^p_M) (f)= \partial_{n-p} (\dag^p_M (f))\\
 =&(-1)^{\frac{p(p+1)}{2}}\sum_{\sigma \in S_{p, n-p}} \sgn
(\sigma) \sum_{i=1}^{n-p}(-1)^{i-1}\{f(x_{\sigma(1)}, \cdots,
 x_{\sigma(p)}), x_{\sigma(p+i)}\}_{M_t}
\otimes\\
& \quad \de\!x_{\sigma(p+1)} \wedge \cdots \widehat{\de\!x_{\sigma(p+i)}} \cdots \wedge \de\!x_{\sigma(n)}  \\
& + (-1)^{\frac{p(p+1)}{2}}\sum_{\substack{1 \leq i<j
\leq n-p\\\sigma \in S_{p, n-p}}} \sgn
(\sigma)(-1)^{i+j}  f(x_{\sigma(1)}, \cdots, x_{\sigma(p)}) \otimes\\
& \quad \de\!\{x_{\sigma(p+i)}, x_{\sigma(p+j)}\} \wedge
\de\!x_{\sigma(p+1)} \wedge
 \cdots  \widehat{\de\!x_{\sigma(p+i)}} \cdots
\widehat{\de\!x_{\sigma(p+j)}}  \cdots \wedge \de\!x_{\sigma(n)}\\
=&L + N,
\end{align*}
where $L$ is the first term, and $N$ is the second one of the last
expression.

Consider the one-to-one correspondence $$S_{p, n-p} \times \{1, 2,
\cdots, n-p\} \to S_{p, 1, n-p-1}, (\sigma, i) \mapsto \tau=\sigma
\beta_i,$$ where $\beta_i=(p+i, p+i-1, \cdots, p+1)$. Note that
$\sgn (\beta_i) =(-1)^{i-1}$. Then
\begin{align*}
L=&(-1)^{\frac{p(p+1)}{2}} \sum_{\sigma \in S_{p, n-p}} \sgn
(\sigma) \sum_{i=1}^{n-p}(-1)^{i-1}\{f(x_{\sigma(1)}, \cdots,
 x_{\sigma(p)}), x_{\sigma(p+i)}\}_{M_t}
\otimes\\
& \quad \de\!x_{\sigma(p+1)} \wedge \cdots  \widehat{\de\!x_{\sigma(p+i)}} \cdots \wedge \de\!x_{\sigma(n)} \\
=&(-1)^{\frac{p(p+1)}{2}} \sum_{\sigma \in S_{p, n-p}}
\sum_{i=1}^{n-p} \sgn (\sigma \beta_i) \{f(x_{\sigma \beta_i(1)},
\cdots, x_{\sigma\beta_i(p)}), x_{\sigma\beta_i(p+1)}\}_{M_t}
\otimes\\
& \quad \de\!x_{\sigma \beta_i(p+2)} \wedge \cdots \wedge
\de\!x_{\sigma\beta_i(p+i)} \wedge
\de\!x_{\sigma\beta_i(p+i+1)}\wedge \cdots \wedge \de\!x_{\sigma \beta_i(n)} \\
=&(-1)^{\frac{p(p+1)}{2}} \sum_{\tau \in S_{p, 1, n-p-1}} \sgn(\tau)
\{f(x_{\tau(1)},
\cdots, x_{\tau(p)}), x_{\tau(p+1)}\}_{M_t} \otimes \de\!x_{\tau(p+2)} \wedge \cdots  \wedge \de\!x_{\tau(n)}  \\
=&(-1)^{\frac{p(p+1)}{2}} \sum_{\tau \in S_{p, 1, n-p-1}} \sgn(\tau)
\{f(x_{\tau(1)},
\cdots, x_{\tau(p)}), x_{\tau(p+1)}\} \otimes \de\!x_{\tau(p+2)} \wedge \cdots  \wedge \de\!x_{\tau(n)}  \\
&+(-1)^{\frac{p(p+1)}{2}} \sum_{\tau \in S_{p,1, n-p-1}} \sgn
(\tau)  f(x_{\tau(1)}, \cdots,
 x_{\tau(p)}) \cdot \phi_{\eta} (x_{\tau(p+1)})
\otimes\\
& \quad \de\!x_{\tau(p+2)} \wedge \cdots \wedge \de\!x_{\tau(n)} \\
=&L_1 + L_2,
\end{align*}
where $L_1$ is the first term, and $L_2$ is the second one of the
last expression.
\begin{align*}
N=& (-1)^{\frac{p(p+1)}{2}}\sum_{\sigma \in S_{p, n-p}}\sgn (\sigma)
\sum_{1 \leq i < j \leq n-p} (-1)^{i+j} f(x_{\sigma(1)}, \cdots,
x_{\sigma(p)}) \otimes \sum_{l=1}^n \\
& \frac{\partial {\{x_{\sigma(p+i)}, x_{\sigma(p+j)}\}}}{\partial
x_l} \,  \de\!x_l \wedge \de\!x_{\sigma(p+1)} \wedge
 \cdots  \widehat{\de\!x_{\sigma(p+i)}}  \cdots
\widehat{\de\!x_{\sigma(p+j)}} \cdots \wedge \de\!x_{\sigma(n)}\\
 =& (-1)^{\frac{p(p+1)}{2}} \sum_{\sigma \in S_{p, n-p}}\sgn (\sigma)
\sum_{1 \leq i < j \leq n-p} (-1)^{i+j} f(x_{\sigma(1)}, \cdots,
x_{\sigma(p)}) \otimes \sum_{l=1}^n \\
&  \frac{\partial {\{x_{\sigma(p+i)}, x_{\sigma(p+j)}\}}}{\partial
x_{\sigma(l)}}\,
 \de\!x_{\sigma(l)} \wedge \de\!x_{\sigma(p+1)} \wedge
 \cdots  \widehat{\de\!x_{\sigma(p+i)}}  \cdots
\widehat{\de\!x_{\sigma(p+j)}} \cdots \wedge \de\!x_{\sigma(n)}\\
=&(-1)^{\frac{p(p+1)}{2}} \sum_{\sigma \in S_{p, n-p}}\sgn (\sigma)
\sum_{1 \leq i < j \leq n-p} (-1)^{j-1} f(x_{\sigma(1)}, \cdots,
x_{\sigma(p)}) \otimes \\
&  \frac{\partial {\{x_{\sigma(p+i)}, x_{\sigma(p+j)}\}}}{\partial
x_{\sigma(p+i)}} \, \de\!x_{\sigma(p+1)} \wedge \cdots
\widehat{\de\!x_{\sigma(p+j)}} \cdots \wedge \de\!x_{\sigma(n)} \\
&  + (-1)^{\frac{p(p+1)}{2}} \sum_{\sigma \in S_{p, n-p}}\sgn
(\sigma) \sum_{1 \leq i < j \leq n-p} (-1)^{i} f(x_{\sigma(1)},
\cdots, x_{\sigma(p)}) \otimes \\
& \frac{\partial {\{x_{\sigma(p+i)}, x_{\sigma(p+j)}\}}}{\partial
x_{\sigma(p+j)}} \, \de\!x_{\sigma(p+1)}
\wedge \cdots \widehat{\de\!x_{\sigma(p+i)}}  \cdots \wedge \de\!x_{\sigma(n)}\\
&+ (-1)^{\frac{p(p+1)}{2}} \sum_{\sigma \in S_{p, n-p}}\sgn (\sigma)
\sum_{1 \leq i < j \leq n-p} (-1)^{i+j} f(x_{\sigma(1)}, \cdots,
x_{\sigma(p)}) \otimes \sum_{l=1}^p \\
&   \frac{\partial {\{x_{\sigma(p+i)}, x_{\sigma(p+j)}\}}}{\partial
x_{\sigma(l)}} \,
 \de\!x_{\sigma(l)} \wedge \de\!x_{\sigma(p+1)} \wedge
 \cdots \widehat{\de\!x_{\sigma(p+i)}} \cdots
\widehat{\de\!x_{\sigma(p+j)}} \cdots \wedge \de\!x_{\sigma(n)}\\
=&  (-1)^{\frac{p(p+1)}{2}} \sum_{\sigma \in S_{p, n-p}}\sgn
(\sigma) \sum_{i=1}^{n-p} \sum_{j=1}^{n-p} (-1)^i f(x_{\sigma(1)},
\cdots, x_{\sigma(p)}) \otimes \frac{\partial
{\{x_{\sigma(p+i)}, x_{\sigma(p+j)}\}}}{\partial x_{\sigma(p+j)}} \\
&  \de\!x_{\sigma(p+1)} \wedge \cdots
\widehat{\de\!x_{\sigma(p+i)}} \cdots \wedge \de\!x_{\sigma(n)} \\
&+ (-1)^{\frac{p(p+1)}{2}} \sum_{\sigma \in S_{p, n-p}}\sgn (\sigma)
\sum_{1 \leq i < j \leq n-p} (-1)^{i+j} f(x_{\sigma(1)}, \cdots,
x_{\sigma(p)}) \otimes \sum_{l=1}^p \\
& \frac{\partial {\{x_{\sigma(p+i)}, x_{\sigma(p+j)}\}}}{\partial
x_{\sigma(l)}} \,
  \de\!x_{\sigma(l)} \wedge \de\!x_{\sigma(p+1)} \wedge
 \cdots \widehat{\de\!x_{\sigma(p+i)}} \cdots
\widehat{\de\!x_{\sigma(p+j)}} \cdots \wedge \de\!x_{\sigma(n)}\\
=& N_1 + N_2,
\end{align*}
where $N_1$ is the first term, and $N_2$ is the second term of the
last expression.

As we did for $L$,
\begin{align*}
N_1&= (-1)^{\frac{p(p+1)}{2}} \sum_{\sigma \in S_{p, n-p}}\sgn
(\sigma) \sum_{i=1}^{n-p} \sum_{j=1}^{n-p} (-1)^i f(x_{\sigma(1)},
\cdots, x_{\sigma(p)}) \otimes \\
& \quad \frac{\partial {\{x_{\sigma(p+i)},
x_{\sigma(p+j)}\}}}{\partial x_{\sigma(p+j)}} \de\!x_{\sigma(p+1)}
\wedge \cdots \widehat{\de\!x_{\sigma(p+i)}} \cdots \wedge \de\!x_{\sigma(n)} \\
=& (-1)^{\frac{p(p+1)}{2}}\sum_{\tau \in S_{p,1, n-p-1}}\sgn (\tau)
\sum_{j=1}^{n-p} (-1) f(x_{\tau(1)}, \cdots, x_{\tau(p)})
\otimes \\
& \quad \frac{\partial {\{x_{\tau(p+1)}, x_{\tau(p+j)}\}}}{\partial
x_{\tau(p+j)}} \de\!x_{\tau(p+2)} \wedge \cdots \wedge \de\!x_{\tau(n)}.
\end{align*}

Obviously, $U=L_1$. By the following Lemma \ref{V1=L2+N1} and Lemma
\ref{V2=N2}, $V_1 = L_2 + N_1$ and $V_2 =  N_2$. It follows that
\begin{align*}
(\dag^{p+1}_M \cdot \delta_p) (f)&= U +V = U+ V_1 + V_2\\
 &=  L_1 + L_2 + N_1 +
N_2 = L + N \\
&= (\partial_{n-p} \cdot \dag^p_M) (f).
\end{align*}
So, the diagram \ref{maindiagram} is commutative.
\end{proof}

\begin{lem}\label{V1=L2+N1}
 $V_1= L_2 + N_1$.
\end{lem}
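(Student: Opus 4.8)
The plan is to bring all three quantities $V_1$, $L_2$, $N_1$ into a common normal form: a sum indexed by $S_{p,1,n-p-1}$ whose $\tau$-th summand has the $M$-value $f(x_{\tau(1)},\cdots,x_{\tau(p)})$ tensored against $\de\!x_{\tau(p+2)}\wedge\cdots\wedge\de\!x_{\tau(n)}$, the tensor being scaled by a sum of partial derivatives of brackets of coordinates. The rewritten expressions for $V_1$, $L_2$, $N_1$ already obtained inside the proof of Proposition~\ref{commu-dia} do most of this work, so I would take those as the starting point and only need to reconcile the ranges of the inner summations.

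First I would observe that in the rewritten form of $V_1$ the summand with $j=p+1$ vanishes, since it carries the factor $\partial\{x_{\tau(p+1)},x_{\tau(p+1)}\}/\partial x_{\tau(p+1)}=0$ (as $\{a,a\}=0$ for all $a\in R$); hence the inner sum over $j$ effectively runs only from $1$ to $p$. Sliding the resulting scalar $\sum_{j=1}^{p}\partial\{x_{\tau(p+1)},x_{\tau(j)}\}/\partial x_{\tau(j)}\in R$ through $\otimes_R$ (legitimate because $M$ is a module over the commutative ring $R$), this presents $V_1$ as
\begin{equation*}
V_1=(-1)^{\frac{p(p+1)}{2}}\sum_{\tau\in S_{p,1,n-p-1}}\sgn(\tau)\, f(x_{\tau(1)},\cdots,x_{\tau(p)})\otimes\Big(\sum_{l=1}^{p}\frac{\partial\{x_{\tau(p+1)},x_{\tau(l)}\}}{\partial x_{\tau(l)}}\Big)\de\!x_{\tau(p+2)}\wedge\cdots\wedge\de\!x_{\tau(n)}.
\end{equation*}

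Next I would rewrite $L_2$ using Lemma~\ref{m-class-poly}: $\phi_\eta(x_{\tau(p+1)})=\sum_{j=1}^{n}\partial\{x_{\tau(p+1)},x_j\}/\partial x_j=\sum_{l=1}^{n}\partial\{x_{\tau(p+1)},x_{\tau(l)}\}/\partial x_{\tau(l)}$, the second equality being a reindexing of the summation by the permutation $\tau$. Absorbing this coefficient into the tensor slot as above, $L_2$ takes the shape displayed for $V_1$ but with the inner sum over $l=1,\dots,n$. On the other hand, the rewritten form of $N_1$ from the proof of Proposition~\ref{commu-dia}, after the substitution $l=p+j$, is exactly $(-1)$ times that same shape with the inner sum over $l=p+1,\dots,n$. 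Adding $L_2$ and $N_1$ summand by summand in $\tau$, the ranges $\{1,\dots,n\}$ and $\{p+1,\dots,n\}$ cancel down to $\{1,\dots,p\}$, yielding precisely the expression for $V_1$, so $V_1=L_2+N_1$.

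The argument is entirely formal; the only delicate points are spotting that the diagonal $j=p+1$ term of $V_1$ drops out and recognizing that the index range entering $\phi_\eta$ (all of $\{1,\dots,n\}$) and the range appearing in $N_1$ (the indices $\{p+1,\dots,n\}$) are complementary with complement $\{1,\dots,p\}$. No new combinatorial identities are required beyond the shuffle reindexing already carried out for the Proposition, and the module-theoretic step of moving $R$-coefficients across $\otimes_R$ is immediate.
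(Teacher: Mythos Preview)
Your argument is correct and is essentially the paper's own proof, merely rearranged: the paper computes $V_1-N_1$ and combines the index ranges $\{1,\dots,p+1\}$ and $\{p+1,\dots,n\}$ into $\{1,\dots,n\}$ (silently using that the doubly counted $\tau(p+1)$ term vanishes) to obtain $\phi_\eta(x_{\tau(p+1)})$ via Lemma~\ref{m-class-poly}, whereas you compute $L_2+N_1$ by subtracting the range $\{p+1,\dots,n\}$ from $\{1,\dots,n\}$ and match the result to $V_1$ after explicitly dropping its null $j=p+1$ term. The content is identical.
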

\begin{proof} By lemma \ref{m-class-poly},
\begin{align*}
&V_1 - N_1\\
 =&(-1)^{\frac{p(p+1)}{2}}\sum_{\tau \in S_{p, 1, n-p-1}}
\sgn (\tau) f(x_{\tau(1)}, \cdots, x_{\tau(p)}) \cdot\\
& (\sum_{j=1}^{p+1}\frac{\partial {\{x_{\tau (p+1)},
x_{\tau(j)}\}}}{\partial x_{\tau (j)}} +  \sum_{j=1}^{n-p}
\frac{\partial {\{x_{\tau(p+1)}, x_{\tau(p+j)}\}}}{\partial
x_{\tau(p+j)}})
 \otimes
\de\!x_{\tau(p+2)} \wedge \cdots \wedge \de\!x_{\tau(n)}\\
=&(-1)^{\frac{p(p+1)}{2}}\sum_{\tau \in S_{p, 1, n-p-1}} \sgn (\tau)
f(x_{\tau(1)}, \cdots, x_{\tau(p)}) \cdot \sum_{j=1}^n
\frac{\partial {\{x_{\tau (p+1)},
x_{\tau(j)}\}}}{\partial x_{\tau (j)}}\\
 &\quad  \otimes
\de\!x_{\tau(p+2)} \wedge \cdots \wedge \de\!x_{\tau(n)}\\
=&(-1)^{\frac{p(p+1)}{2}} \sum_{\tau \in S_{p,1, n-p-1}} \sgn
(\tau)  f(x_{\tau(1)}, \cdots,
 x_{\tau(p)}) \cdot \phi_{\eta} (x_{\tau(p+1)})\\
&\quad \otimes
\de\!x_{\tau(p+2)} \wedge \cdots \wedge \de\!x_{\tau(n)}\\
 =&L_2.
\end{align*}
%
%
%
%
\end{proof}

\begin{lem}\label{V2=N2}
 $V_2=N_2$.
\end{lem}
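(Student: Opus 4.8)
The identity $V_2=N_2$ is purely combinatorial: unlike Lemma \ref{V1=L2+N1}, it makes no use of the modular derivation, and I would prove it by matching the two sums term by term under an explicit bijection between their index sets, followed by a sign count.

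First, set up the bijection. A summand of $V_2$ is indexed by $(\sigma,i,j,l)$ with $\sigma\in S_{p+1,n-p-1}$, $1\le i<j\le p+1$ and $p+2\le l\le n$. Write $A=\{\sigma(1)<\cdots<\sigma(p+1)\}$ and $B=\{\sigma(p+2)<\cdots<\sigma(n)\}$ for the two increasing blocks of $\sigma$, and put $a=\sigma(i)$, $b=\sigma(j)$ (so $a<b$) and $c=\sigma(l)\in B$. Set $C=(A\setminus\{a,b\})\cup\{c\}$, a set of size $p$, and $D=\{1,\dots,n\}\setminus C=(B\setminus\{c\})\cup\{a,b\}$; let $\tau\in S_{p,n-p}$ be the shuffle whose increasing blocks are $C$ and $D$, and let $l'$, $i'$, $j'$ be the positions of $c$ in $C$, of $a$ in $D$, and of $b$ in $D$ (so $i'<j'$). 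One checks at once that $(\sigma,i,j,l)\mapsto(\tau,i',j',l')$ is a bijection onto the index set of $N_2$, the inverse recovering $c=\tau(l')$, $a=\tau(p+i')$, $b=\tau(p+j')$, hence $A=(C\setminus\{c\})\cup\{a,b\}$ and $B=(D\setminus\{a,b\})\cup\{c\}$, hence $\sigma,i,j,l$.

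Next, match the bodies. In the $V_2$-summand, $\partial\{x_{\sigma(i)},x_{\sigma(j)}\}/\partial x_{\sigma(l)}=\partial\{x_a,x_b\}/\partial x_c$ is literally the derivative factor of the $N_2$-summand; the argument list of $f$ is $x_c$ followed by the elements of $A\setminus\{a,b\}$ in increasing order, which by skew-symmetry of $f$ equals $(-1)^{r}f(x_{\tau(1)},\dots,x_{\tau(p)})$ with $r=\#\{m\in A\setminus\{a,b\}\mid m<c\}$; and $\de\!x_{\sigma(p+2)}\wedge\cdots\wedge\de\!x_{\sigma(n)}$, which is $\de\!x_m$ over $m\in B$ in increasing order, equals $(-1)^{r'}\de\!x_c$ wedged with $\de\!x_m$ over $m\in B\setminus\{c\}$ in increasing order, where $r'=\#\{m\in B\mid m<c\}$; since $D\setminus\{a,b\}=B\setminus\{c\}$, this last wedge is exactly the form in the $N_2$-summand. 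So the matched summands agree up to the sign $(-1)^{r+r'}$ times the ratio of the remaining scalar prefactors.

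Finally — the only real work — check that this overall sign is trivial. Collecting the leading signs $(-1)^{(p+1)(p+2)/2}$ and $(-1)^{p(p+1)/2}$ (ratio $(-1)^{p+1}$), the factors $\sgn(\sigma)(-1)^{i+j}$ and $\sgn(\tau)(-1)^{i'+j'}$, and $(-1)^{r+r'}$, equality of the matched summands is equivalent to $a+b+c\equiv i+j+i'+j'+r+r'\pmod 2$. Using $\sgn(\rho)=(-1)^{\sum_{e\in E}e-k(k+1)/2}$ for a shuffle $\rho$ with first increasing block $E$ of size $k$, and the fact that $C$ arises from $A$ by replacing $\{a,b\}$ with $\{c\}$, one gets $\sgn(\sigma)\sgn(\tau)=(-1)^{a+b+c+p+1}$, so after cancellation the claim becomes exactly that congruence. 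To verify it, write $i-1,j-1$ as $\#\{m\in A\mid m<a\},\#\{m\in A\mid m<b\}$ and $i'-1,j'-1$ as $\#\{m\in D\mid m<a\},\#\{m\in D\mid m<b\}$; setting $P=A\setminus\{a,b\}=C\setminus\{c\}$ and $Q=B\setminus\{c\}=D\setminus\{a,b\}$ and using $a<b$, the $+1$ contributions cancel and the passage from $A,D$ to $P,Q$ combines everything so that the right-hand side equals $\sum_{x\in\{a,b,c\}}\bigl(\#\{m\in P\mid m<x\}+\#\{m\in Q\mid m<x\}\bigr)$ modulo $2$. Since $P\cup Q=\{1,\dots,n\}\setminus\{a,b,c\}$, this sum equals $\sum_{x\in\{a,b,c\}}\bigl((x-1)-\#\{y\in\{a,b,c\}\mid y<x\}\bigr)=(a+b+c)-3-3\equiv a+b+c\pmod 2$, which is the desired congruence. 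The whole difficulty is this fivefold sign bookkeeping; once the bijection is fixed, everything else is forced.
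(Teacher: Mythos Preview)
Your argument is correct. The bijection you write down is exactly the one underlying the paper's proof, and your sign computation checks out: the key identity $\sgn(\sigma)\sgn(\tau)=(-1)^{a+b+c+p+1}$ follows from the closed formula $\sgn(\rho)=(-1)^{\sum_{e\in E}e-k(k+1)/2}$ you quote, and the parity count $i+j+i'+j'+r+r'\equiv a+b+c\pmod 2$ is verified just as you describe, since $P\cup Q=\{1,\dots,n\}\setminus\{a,b,c\}$.

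The route differs from the paper's only in bookkeeping. Rather than biject the $V_2$-indices directly onto the $N_2$-indices, the paper introduces the auxiliary set $S_{p-1,2,1,n-p-2}$ of $(p-1,2,1,n-p-2)$-shuffles and maps \emph{each} index set separately onto it by explicit products of cycles ($\sigma\beta_l\alpha_j'\alpha_i$ on the $V_2$ side, $\sigma\beta_{p+i}\beta_{p+j}'\alpha_l$ on the $N_2$ side); the signs are then read off as products of cycle signs, and the two normal forms are compared term by term. This has the advantage that the sign computation is purely local (signs of transpositions), at the cost of introducing an extra index set. Your direct bijection avoids the intermediary but requires the global shuffle-sign formula and the final parity count; the two arguments are equivalent reorganizations of the same combinatorics.
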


\begin{proof}Let $S_{p-1, 2, 1, n-p-2}$ denote the set of all $(p-1, 2, 1, n-p-2)$-shuffles,
which are the permutations $\sigma \in S_n$ such that
$\sigma(1)< \cdots < \sigma(p-1)$, $\sigma(p)< \sigma(p+1)$ and
$\sigma(p+3)< \cdots < \sigma(n)$. For any shuffle $\sigma \in
S_{p+1, n-p-1}$, any $i < j \in \{1, 2, \cdots, p+1\}$ and any $l
\in \{p+2, p+3, \cdots, n\}$, we can get a shuffle $\tau \in S_{p-1,
2, 1, n-p-2}$ by the following three steps. First move  $\sigma(l)$
backward to the position $p+2$ which can be realized as $\sigma
\beta_l$ with $\beta_l=(l, l-1, \cdots, p+2)$; then move $\sigma(j)$
forward to the position $p+1$ which is realized as $ \sigma \beta_l
\alpha_j'$ with $\alpha_j'=(j, j+1, \cdots, p+1)$; and finally move
$\sigma(i)$ forward to the position $p$ which is realized as $\sigma
\beta_l \alpha_j' \alpha_i$ with $\alpha_i=(i, i+1, \cdots, p)$. So
we have a map
$$S_{p+1, n-p-1} \times \{1\leq i < j \leq
 p+1\} \times \{p+2 \leq l \leq n \} \to S_{p-1, 2, 1, n-p-2},$$
 $$ (\sigma, i < j, l) \mapsto \tau = \sigma  \beta_l \alpha_j' \alpha_i,$$
This map turns out to be a bijection. Note that $\sgn (\beta_l)
=(-1)^{l-p}$, $\sgn (\alpha_j') =(-1)^{p+1-j}$, $\sgn (\alpha_i)
=(-1)^{p-i}$.

 Then
\begin{align*}
 V_2=& (-1)^{\frac{(p+1)(p+2)}{2}}\sum_{\substack{1 \leq i < j \leq p+1\\\sigma \in S_{p+1,
n-p-1}}} \sgn (\sigma)(-1)^{i+j}
\sum_{l=p+2}^n \frac{\partial
{\{x_{\sigma(i)}, x_{\sigma(j)}\}}}{\partial x_{\sigma (l)}} \\
& \quad f(x_{\sigma(l)}, x_{\sigma(1)}, \cdots,
\widehat{x_{\sigma(i)}}, \cdots, \widehat{x_{\sigma(j)}}, \cdots,
x_{\sigma(p+1)}) \otimes \de\!x_{\sigma(p+2)} \wedge \cdots \wedge
\de\!x_{\sigma(n)}\\
=& (-1)^{\frac{(p+1)(p+2)}{2}}\sum_{\substack{1 \leq i < j \leq
p+1\\\sigma \in S_{p+1, n-p-1}}} \sum_{l=p+2}^n  \sgn (\sigma
\beta_l \alpha_j' \alpha_i)(-1)^{p+l+1}\\
&\frac{\partial {\{x_{\sigma \beta_l \alpha_j' \alpha_i (p)},
x_{\sigma \beta_l\alpha_j' \alpha_i (p+1)}\}}} {\partial x_{\sigma
\beta_l \alpha_j' \alpha_i (p+2)}} f(x_{\sigma \beta_l\alpha_j'
\alpha_i (p+2)}, x_{\sigma \beta_l\alpha_j '\alpha_i(1)}, \cdots,
x_{\sigma \beta_l\alpha_j'
\alpha_i(p-1)}) \\
&\otimes (-1)^{l-p}\de\!x_{\sigma \beta_l\alpha_j' \alpha_i(p+2)} \wedge
\cdots
\wedge \de\!x_{\sigma \beta_l\alpha_j' \alpha_i (n)} \\
=&(-1)^{\frac{p(p+1)}{2}+p}\sum_{\tau \in S_{p-1,2, 1, n-p-2}} \sgn
(\tau) \frac{\partial
{\{x_{\tau(p)}, x_{\tau(p+1)}\}}}{\partial x_{\tau (p+2)}} \\
& \quad f(x_{\tau(p+2)}, x_{\tau (1)}, \cdots, x_{\tau (p-1)})
\otimes \de\!x_{\tau (p+2)} \wedge \cdots \wedge \de\!x_{\tau (n)}.
\end{align*}

On the other hand, for any shuffle $\sigma \in S_{p, n-p}$, any $i <
j \in \{1, 2, \cdots, n-p\}$ and any $l \in \{1, 2, \cdots, p\}$, we
can get a shuffle $\tau \in S_{p-1, 2, 1, n-p-2}$ in the following
three steps. First move $\sigma(p+i)$ backward to the position
$p+1$, which is realized as  $\sigma \beta_{p+i}$ with
$\beta_{p+i}=(p+i, p+i-1, \cdots, p+1)$; then move $\sigma(p+j)$
backward to the position $p+2$, which is realized as  $\sigma
\beta_{p+i}\beta_{p+j}'$ with $\beta_{p+j}'=(p+j, p+j-1, \cdots,
p+2)$; and finally move $\sigma(l)$ forward to the position $p+2$,
which is realized as  $\sigma \beta_{p+i}\beta_{p+j}'\alpha_l$ with
$\alpha_l=(l, l+1, \cdots, p, p+1, p+2)$. So we have the following
one-to-one correspondence
$$S_{p, n-p} \times \{1\leq i < j \leq
 n-p\} \times \{1 \leq l \leq p \} \to S_{p-1, 2, 1, n-p-2},$$
 $$ (\sigma, i < j, l ) \mapsto \tau= \sigma \beta_{p+i}\beta_{p+j}'\alpha_l.$$
Note that $\sgn (\beta_{p+i}) =(-1)^{i-1}$, $\sgn (\beta_{p+j}')
=(-1)^{j}$, $\sgn (\alpha_l) =(-1)^{p-l}$ . Then
\begin{align*}
N_2=&(-1)^{\frac{p(p+1)}{2}}\sum_{\sigma \in S_{p, n-p}}\sgn (\sigma)
\sum_{1 \leq i < j \leq n-p} (-1)^{i+j} f(x_{\sigma(1)}, \cdots,
x_{\sigma(p)}) \otimes \sum_{l=1}^p \\
&\frac{\partial
{\{x_{\sigma(p+i)}, x_{\sigma(p+j)}\}}}{\partial x_{\sigma(l)}} \,
 \de\!x_{\sigma(l)} \wedge \de\!x_{\sigma(p+1)} \wedge
 \cdots \widehat{\de\!x_{\sigma(p+i)}} \cdots
\widehat{\de\!x_{\sigma(p+j)}} \cdots \wedge \de\!x_{\sigma(n)}\\
=&(-1)^{\frac{p(p+1)}{2}}\sum_{\substack{1 \leq i < j \leq n-p\\\sigma \in S_{p, n-p}}}
\sum_{l=1}^p \sgn (\sigma \beta_{p+i} \beta_{p+j}'\alpha_l)(-1)^{p-l-1} \\
&(-1)^{l-1} f(x_{\sigma \beta_{p+i} \beta_{p+j}'\alpha_l(p+2)},
x_{\sigma \beta_{p+i} \beta_{p+j}'\alpha_l(1)},
\cdots, x_{\sigma \beta_{p+i} \beta_{p+j}'\alpha_l (p-1)}) \\
&\otimes \frac{\partial\{x_{\sigma \beta_{p+i}
\beta_{p+j}'\alpha_l(p)}, x_{\sigma \beta_{p+i} \beta_{p+j}'\alpha_l
(p+1)}\}}{\partial x_{\sigma \beta_{p+i} \beta_{p+j}'\alpha_l
(p+2)}}\\
&\de\!x_{\sigma \beta_{p+i} \beta_{p+j}'\alpha_l(p+2)} \wedge \de\!x_{\sigma
\beta_{p+i} \beta_{p+j}'\alpha_l(p+3)}
\wedge \cdots \wedge \de\!x_{\sigma \beta_{p+i} \beta_{p+j}'\alpha_l(n)}\\
 =&(-1)^{\frac{p(p+1)}{2}+p}\sum_{\tau \in S_{p-1,2, 1,
n-p-2}}\sgn (\tau) f(x_{\tau(p+2)}, x_{\tau (1)}, \cdots,
x_{\tau (p-1)}) \otimes\\
&\frac{\partial\{x_{\tau (p)}, x_{\tau (p+1)}\}}{\partial x_{\tau (p+2)}} \,
 \de\!x_{\tau(p+2)} \wedge dx_{\tau(p+3)}\wedge \cdots \wedge
 \de\!x_{\tau(n)}.
\end{align*}
So, $V_2 = N_2$. Thus the lemma is proved.
\end{proof}

By lemma \ref{iso-chi-omega}, the commutative diagram in Proposition \ref{commu-dia} leads to a twisted Poincar\'{e} duality between
the Poisson homology of $R$ with values in $M_t$ and the Poisson cohomology of $R$ with values in $M$.
\begin{thm}\label{main-theorem}
For all $p \in \mathbb{N}$,  $\PH_p(R, M_t)\cong \PH^{n-p}(R, M)$.
The isomorphism is functorial.
\end{thm}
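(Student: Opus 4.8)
The plan is to upgrade the pointwise isomorphisms $\dag^p_M$ into an isomorphism of complexes and then pass to (co)homology. By Lemma~\ref{iso-chi-omega} each $\ddag^p_M\colon \mathfrak{X}^p(M)\to M\otimes_R\Omega^{n-p}(R)$ is a $k$-linear isomorphism, hence so is $\dag^p_M=(-1)^{p(p+1)/2}\ddag^p_M$, since the two differ only by a nonzero scalar. Proposition~\ref{commu-dia} asserts precisely that the family $\{\dag^p_M\}_{p\ge 0}$ intertwines the Poisson cochain differential $\delta^p$ with the Poisson chain differential $\partial_{n-p}$; that is, $\dag_M=\{\dag^p_M\}_p$ is a morphism of cochain complexes from the Poisson cochain complex $(\mathfrak{X}^{\bullet}(M),\delta)$ to the complex $(M_t\otimes_R\Omega^{n-\bullet}(R),\partial)$ obtained from the Poisson chain complex of $M_t$ by the reindexing $q\mapsto n-q$. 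Since every component $\dag^p_M$ is bijective, $\dag_M$ is an isomorphism of complexes, and I would then invoke the elementary fact that an isomorphism of complexes induces an isomorphism on (co)homology in every degree, so that for all $p\in\mathbb{N}$
$$\PH^{n-p}(R,M)=H^{n-p}\bigl(\mathfrak{X}^{\bullet}(M),\delta\bigr)\cong H_{p}\bigl(M_t\otimes_R\Omega^{\bullet}(R),\partial\bigr)=\PH_{p}(R,M_t).$$
(For $p>n$ both sides vanish, since $\mathfrak{X}^p(M)=0$ and $\Omega^{n-p}(R)=0$, so nothing is lost by assuming $0\le p\le n$.)

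To obtain functoriality I would first observe that $M\mapsto M_t$ is a functor on right Poisson $R$-modules: a morphism $f\colon M\to N$ of Poisson modules remains a morphism for the brackets twisted by the fixed Poisson derivation $\phi_\eta$, because $f(\{m,a\}_M+m\cdot\phi_\eta(a))=\{f(m),a\}_N+f(m)\cdot\phi_\eta(a)$. Such an $f$ then induces a morphism of Poisson cochain complexes $\mathfrak{X}^{\bullet}(f)\colon F\mapsto f\circ F$ (the composite is again a multiderivation because $f$ is $R$-linear, and it commutes with $\delta$ because $f$ commutes with the bracket) and a morphism of Poisson chain complexes $f\otimes_R\mathrm{id}$ (which commutes with $\partial$ because $f$ commutes with the twisted bracket). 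The defining formula \eqref{definition-of-ddag} shows immediately that $\dag^p_N\circ\mathfrak{X}^p(f)=(f\otimes_R\mathrm{id})\circ\dag^p_M$, since $\ddag$ only touches the coefficient $F(x_{\sigma(1)},\dots,x_{\sigma(p)})$ to which $f$ is applied. Passing to (co)homology then yields the naturality of the displayed isomorphism in $M$.

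The only step that demands genuine work is the compatibility of $\dag_M$ with the differentials, i.e. the shuffle-and-sign bookkeeping carried out in Proposition~\ref{commu-dia} (Lemmas~\ref{V1=L2+N1} and \ref{V2=N2}); it is there that the twist by the modular derivation $\phi_\eta$ is forced, via Lemma~\ref{m-class-poly}. Once that commutative diagram is in hand, Theorem~\ref{main-theorem} is a purely formal consequence, requiring no further construction or estimate.
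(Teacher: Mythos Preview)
Your proposal is correct and follows exactly the paper's approach: the theorem is deduced formally from Lemma~\ref{iso-chi-omega} (each $\dag^p_M$ is an isomorphism) and Proposition~\ref{commu-dia} (the squares commute), so that $\dag_M$ is an isomorphism of complexes inducing the claimed isomorphism on (co)homology. Your treatment of functoriality is in fact more detailed than the paper's, which simply asserts it without argument.
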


\begin{rmk}
 Suppose the polynomial Poisson algebra $R$ is unimodular, that is  $\phi_{\eta}=0$.
 Let $M=R$. Then $M_t=M=R$ and $\PH_p(R)\cong \PH^{n-p}(R)$, which is the  Poincar\'{e} duality
 for unimodular Poisson algebra.
\end{rmk}

\begin{rmk}  Theorem \ref{main-theorem}  generalizes \cite[Theorem
3.4.2]{LR} and \cite[Theorem 3.6]{Zhu} (See Example \ref{Launois+zhu}).
\end{rmk}

%

\section*{Acknowledgments}
Q.-S. Wu is supported by the NSFC (project 11171067 and key project 11331006), S.-Q. Wang is supported by the NSFC (project 11301180) , China Postdoctoral Science Foundation (project 2013M541478) and Fundamental Research Funds for the Central Universities (project 222201314032).

\end{document}